\newcommand{%
	\begingroup
	\fontsize{7pt}{12pt}
	\def\svgwidth{0.8\columnwidth}
	\import{./figures/}{.pdf_tex}
	\endgroup
}[2][0.8]{%
	\begingroup
	\fontsize{7pt}{12pt}
	\def\svgwidth{#1\columnwidth}
	\import{./figures/}{#2.pdf_tex}
	\endgroup
}
\newtheorem{problem}{Problem}[section]
\newtheorem{theorem}[problem]{Theorem}
\newtheorem{proposition}[problem]{Proposition}
\newtheorem{corollary}[problem]{Corollary}
\newtheorem{remark}[problem]{Remark}
\theoremstyle{definition} 
\newtheorem{definition}[problem]{Definition}
\begin{document}
\title{Generalized Bernstein Theorem for Stable Minimal Plateau Surfaces}
\author{Gaoming Wang}
\date{}
\maketitle

\abstract{In this paper, we consider a Generalized Bernstein Theorem for a type of generalized minimal surfaces, namely minimal Plateau surfaces. We show that if an orientable minimal Plateau surface is stable and has quadratic area growth in $\mathbb{R}^3 $, then it is flat.}

\section{Introduction}%
\label{sec:introduction}


When we study minimal surfaces, we usually focus on smooth minimal surfaces. But in general, "non-smooth minimal surfaces" can be found in some natural phenomena. Indeed, two types of singular points are observed in soap films and they are documented in Plateau's work \cite{plateau1873laws}.
Hence, a natural problem raised by Bernstein and Maggi \cite{Bernstein2021} is,
\begin{problem}
	\label{prob_Bern}
	To what extent may the classical theory of minimal surfaces be generalized to "non-smooth minimal surfaces"?
\end{problem}

In \cite{Bernstein2021}, they defined those "non-smooth minimal surfaces" as \textit{minimal Plateau surfaces}.

To better illustrate our theorem here, let us define the minimal Plateau surfaces first.
We define three cones in $\mathbb{R}^3 $ by
\begin{align*}
	P:={} & \{ p=(x_1,x_2,x_3) \in \mathbb{R}^3 : x_3=0 \},\\
	Y:={} & \{ p=(r\cos \theta, r \sin \theta,x_3)\in \mathbb{R}^3 :\theta=0, \frac{2\pi}{3} \text{ or }\frac{4\pi}{3}, r\ge 0 \},\\
	T:={}& \{p= ap_i+bp_j:a,b\ge 0, 1\le i<j\le 4 \},
\end{align*}
where $p_i$ are defined as
\[
	p_1=(1,1,1),\quad p_2=(-1,-1,1),\quad p_3=(-1,1,-1),\quad p_4=(1,-1,-1).
\]

For the following definition, we fix an open set $U\subset \mathbb{R}^3$, $\alpha \in (0,1)$, and a relatively closed subset $\Sigma \subset U$, we use $\mathrm{reg}(\Sigma)$ to denote the set of all $p \in \Sigma$ such that $B_r(p)\cap \Sigma $ is a $C^{1,\alpha}$ surface for $r>0$ small enough and $B_r(p)\subset U$.
\begin{definition}
	\label{def_plateau}
	(\cite{Bernstein2021})
	We say a relatively closed subset $\Sigma \subset U$ is a \textit{Plateau surface} if the following two conditions hold,
\begin{itemize}
	\item For any $p \in \Sigma$, there is an $r>0$ and a $C^{1,\alpha}$ diffeomorphism $\phi:B_r(p)\rightarrow \mathbb{R}^3 $ such that $\phi(\Sigma\cap B_r(p))=\boldsymbol{C}\cap B_r(p)$ and $D\phi_p \in O(3)$ for some $\boldsymbol{C}=P,Y$ or $T$.
	\item Each connected component of $\mathrm{reg}(\Sigma)$ has constant mean curvature.
\end{itemize}
In addition, if each connected component of $\mathrm{reg}(\Sigma)$ has zero mean curvature, then we say $\Sigma$ is a \textit{minimal Plateau surface} in $U$.
\end{definition}

We note the \textit{tangent cone} $T_p\Sigma:=\lim_{\rho\rightarrow 0^+} \frac{\Sigma-p}{\rho}$ for any $p \in \Sigma$ can only be isometric to $P,Y$ or $T$ if $\Sigma$ is a minimal Plateau surface.
In particular, we use the following notations to denote $Y$-type and $T$-type singular sets.
\begin{align*}
	\Sigma_Y:={} & \left\{ p \in \Sigma:T_p \Sigma\text{ is isometric to $Y$} \right\},\\
	\Sigma_T:={} & \left\{ p \in \Sigma:T_p \Sigma\text{ is isometric to $T$} \right\},
\end{align*}


For a minimal Plateau surface $\Sigma$ in $U$, if $\Sigma_T=\emptyset $ and $\Sigma_Y\neq \emptyset $, we call it \textit{a minimal triple junction surface}. (It was named as $Y$-surface in \cite{Bernstein2021}.)
Indeed, the author has studied the properties of minimal triple junction surfaces in his thesis \cite{thesis} and in \cite{Wang2022curvature} as a special case of multiple junction surfaces.

Now we can state our main theorem.

\begin{theorem}
	
	Suppose $\Sigma$ is a minimal Plateau surface in $\mathbb{R}^3$. We assume $\Sigma$ is orientable, complete, stable and has at most quadratic area growth.
	Then $\mathrm{reg}(\Sigma)$ is flat.
	\label{thm_main_Plat}
\end{theorem}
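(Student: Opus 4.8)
The strategy is to adapt the classical Fischer-Colbrie–Schoen / do Carmo–Peng / Pogorelov argument for the Bernstein theorem for stable minimal surfaces, carrying it through the singular structure of a minimal Plateau surface. The starting point is the stability inequality for $\Sigma$: for every compactly supported Lipschitz test function $\varphi$ on $\mathrm{reg}(\Sigma)$ (or, more precisely, on $\Sigma$ away from the $T$-points, with the natural matching conditions along $\Sigma_Y$ coming from the first variation of area being zero there),
\[
  \int_{\mathrm{reg}(\Sigma)} |A|^2 \varphi^2 \, d\mathcal{H}^2 \le \int_{\mathrm{reg}(\Sigma)} |\nabla \varphi|^2 \, d\mathcal{H}^2 .
\]
I would first establish the correct functional-analytic framework for this inequality — in particular, that the $Y$-singular curve carries no boundary term (the three sheets meet at $120^\circ$, so the conormal derivatives cancel) and that the $T$-points, being of codimension $2$ in $\Sigma$ and isolated, are removable for the relevant Sobolev functions (capacity zero). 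This reduces everything to an inequality that looks formally identical to the smooth case but holds on the singular surface $\Sigma$.

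The second step is the logarithmic cutoff / conformal-type test function trick. Quadratic area growth, $\mathcal{H}^2(\Sigma \cap B_R) \le C R^2$, is exactly the condition needed to run the argument of Colding–Minicozzi or Fischer-Colbrie–Schoen: one plugs in $\varphi = $ (a power of $r$ times a log-cutoff, or $\varphi$ built from a positive Jacobi field via Fischer-Colbrie–Schoen) and sends the cutoff to infinity. Here I would use that stability plus completeness gives a positive solution $u>0$ of the Jacobi equation $\Delta u + |A|^2 u = 0$ on $\mathrm{reg}(\Sigma)$; the matching/removability analysis from Step 1 lets this $u$ be interpreted globally on $\Sigma$. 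Then the standard computation (writing $\varphi = u \psi$ and integrating by parts) combined with quadratic area growth forces $\int_{\mathrm{reg}(\Sigma)} |A|^2 = 0$, hence $A \equiv 0$ on $\mathrm{reg}(\Sigma)$, i.e.\ $\mathrm{reg}(\Sigma)$ is flat.

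The main obstacle — and the part deserving the most care — is Step 1: making rigorous that the $Y$-singular set contributes no boundary term in the integration by parts and that one may legitimately use test functions that are merely Lipschitz (not smooth) across $\Sigma_Y$, together with the claim that $T$-points are negligible. This requires the local $C^{1,\alpha}$ structure from Definition~\ref{def_plateau}, a density/monotonicity bound near singular points to control areas, and a capacity argument to excise neighborhoods of $\Sigma_T$ and tubular neighborhoods of $\Sigma_Y$ with error terms that vanish in the limit. A secondary subtlety is verifying that ``stable'' for a Plateau surface indeed yields the existence of a global positive Jacobi field in this singular setting; if that is not available directly, I would instead run the purely cutoff-based version of the argument (do Carmo–Peng style, testing the stability inequality with $\varphi = |x|^{-\sigma}$ truncated), which only needs the stability inequality and the quadratic area bound and avoids invoking a Jacobi field altogether. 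Once the singular set is handled, the remainder is the classical Bernstein-for-stable-surfaces computation essentially verbatim.
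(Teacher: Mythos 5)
There is a genuine gap, and it sits exactly at the point you flag as ``Step 1'' but then dispose of too quickly. For a minimal Plateau surface, stability does \emph{not} give the inequality $\int |A|^2\varphi^2 \le \int |\nabla\varphi|^2$ for every compactly supported Lipschitz $\varphi$ on $\mathrm{reg}(\Sigma)$. Two things go wrong with that assumption. First, the second variation contains a genuine junction term: with $\phi = V\cdot\nu$ one gets
\[
\sum_{\Lambda}\int_{\Lambda}\bigl(|\nabla_\Lambda\phi_\Lambda|^2-|A_\Lambda|^2\phi_\Lambda^2\bigr)\,d\mu_\Lambda \;-\;\sum_{L}\sum_{i=1}^{3}\int_{L}(\phi^i_L)^2\,\boldsymbol{H}_L\cdot\tau^i_L\,d\mu_L \;\ge\;0,
\]
and the curve term does not cancel by ``conormal derivatives cancelling at $120^\circ$'': the balancing condition $\sum_i\tau^i_L=0$ only kills it when $(\phi^i_L)^2$ is independent of $i$. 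Second, and more fundamentally, admissible scalar test functions are only those arising from an ambient variation, i.e.\ satisfying the compatibility condition $\sum_{i=1}^{3}\mathrm{sign}^i_L\,\phi^i_L=0$ along every $Y$-curve. The naive choice (one cutoff function taking the same value on all three sheets) violates this, since $\pm1\pm1\pm1\neq 0$; so neither your log-cutoff fallback nor a Fischer-Colbrie--Schoen positive Jacobi field (whose existence in this singular setting you would also have to establish) can be fed into the stability inequality as written. Your proposal also never invokes orientability, which is a sign the central mechanism is missing: the theorem's hypothesis of orientability is what produces the admissible test functions.

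The paper resolves precisely this tension: for each connected component $\Omega^j$ of $\mathbb{R}^3\setminus\Sigma$ it builds the locally constant function $f^{\Omega^j}$ with values $\pm1$ (according to whether $\nu_\Lambda$ points into or out of $\Omega^j$) on sheets bordering $\Omega^j$ and $0$ elsewhere; orientability guarantees these are well defined and compatible. Plugging $\phi=f^{\Omega^j}\zeta$ (with $\zeta$ the logarithmic cutoff) into the stability inequality and \emph{summing over all components} $j$, each sheet is counted twice and, along each $Y$-curve, exactly two of the three conormals appear for each wedge component, so the junction terms assemble into $2\zeta^2\,\boldsymbol{H}_L\cdot\sum_i\tau^i_L=0$. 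Only after this cancellation does the quadratic area growth and the $C/n$ estimate finish the argument, as in the classical case. The capacity/removability issues you emphasize for $\Sigma_T$ are not the obstruction (the $T$-points never enter the curve integrals); the obstruction is the choice of compatible test functions and the cancellation of the junction term, which your plan does not supply.
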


Here, the related concepts are defined in Section \ref{sec:preliminary}.
As a corollary, we can give a more precise description of minimal Plateau surfaces satisfying Theorem \ref{thm_main_Plat}.

\begin{corollary}\label{cor_main_plat}
	If $\Sigma$ is a minimal Plateau surface satisfying the conditions in Theorem \ref{thm_main_Plat}, then each component of $\mathrm{reg}(\Sigma)$ is an open subset of some plane, $\Sigma_Y$ is a union of disjoint line segments, rays, and straight lines.

	In particular, if $\Sigma_T=\emptyset $, then we can write $\Sigma=N\times \mathbb{R} $ after some right motions in $\mathbb{R}^3 $ where $N$ is an embedded stationary network in $\mathbb{R}^2 $. 

	If $\Sigma_T\neq \emptyset $, there are only two possibilities. The first one is $\Sigma_T$ contains only one point and $\Sigma$ is isometric to $T$.
	The second one is $\Sigma_T$ contains two points and we can glue two $T$-type sets to get $\Sigma$ as shown in Figure \ref{fig:flat-tplanes}.
\end{corollary}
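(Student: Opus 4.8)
The plan is to prove the generalized Bernstein theorem (Theorem~\ref{thm_main_Plat}) by adapting the classical Fischer-Colbrie--Schoen / do~Carmo--Peng argument for stable minimal surfaces in $\mathbb{R}^3$ to the Plateau setting, and then to read off the structural classification of Corollary~\ref{cor_main_plat} from it. The key analytic input is a stability inequality for $\Sigma$: for the minimal Plateau surface, stability should say that for every $\phi \in C_c^\infty(\mathrm{reg}(\Sigma))$ (or, better, every Lipschitz $\phi$ with compact support that is admissible across $\Sigma_Y$, i.e.\ respects the balancing/Neumann conditions at the triple-junction edges),
\[
	\int_{\mathrm{reg}(\Sigma)} |A|^2 \phi^2 \, d\mathcal{H}^2 \le \int_{\mathrm{reg}(\Sigma)} |\nabla \phi|^2 \, d\mathcal{H}^2 ,
\]
where $|A|$ is the second fundamental form of the regular part. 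The first step is therefore to pin down, from Section~\ref{sec:preliminary}, the precise form of the second-variation functional on a Plateau surface: the singular sets $\Sigma_Y$ and $\Sigma_T$ have Hausdorff dimension $\le 1$ and zero, respectively, and hence have zero $2$-capacity in $\Sigma$, so that competitor functions need no special behavior there beyond being Lipschitz; the boundary terms along $\Sigma_Y$ that arise from integrating by parts on each sheet cancel in pairs because the three sheets meet at $120^\circ$ and the Plateau condition forces the co-normal derivatives to sum to zero. This capacity/cancellation bookkeeping is the first thing I would nail down carefully.

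Second, with the stability inequality in hand, I would run the logarithmic cutoff trick. Choose the competitor $\phi = f(|A|)\, \eta$ where $\eta$ is a standard cutoff that is $1$ on $B_R$, $0$ outside $B_{2R}$, with $|\nabla \eta| \le C/R$, and exploit the quadratic area growth $\mathcal{H}^2(\Sigma \cap B_r) \le C r^2$. The Simons-type inequality $\Delta_\Sigma |A| \ge -|A|^3$ holds on each component of $\mathrm{reg}(\Sigma)$ (again with no boundary contribution, by the same $120^\circ$ cancellation, since the co-normal of $|A|$ along the edges matches up: along $\Sigma_Y$ the three sheets are exchanged by reflections that preserve $|A|$). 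Feeding $\Delta_\Sigma |A| \ge -|A|^3$ and the stability inequality into each other in the usual way (test the Simons inequality against $|A|\phi^2$, integrate by parts, absorb) yields an estimate of the form $\int_{\Sigma \cap B_R} |A|^2 \le C R^{-2}\,\mathcal{H}^2(\Sigma \cap B_{2R}) \le C'$, and then a refined iteration (or the Colding--Minicozzi / Schoen-type improvement using the quadratic growth to kill the constant) forces $\int_\Sigma |A|^2 = 0$, i.e.\ $A \equiv 0$ on $\mathrm{reg}(\Sigma)$, which is exactly the assertion that $\mathrm{reg}(\Sigma)$ is flat. I would then need to check that completeness plus the Plateau structure prevents any pathology where $\mathrm{reg}(\Sigma)$ has finite total curvature but is nonflat — but total curvature zero already gives flatness directly, so this is automatic once $A\equiv 0$.

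Third, for Corollary~\ref{cor_main_plat}, I would upgrade "flat" to the global picture. Since each component of $\mathrm{reg}(\Sigma)$ is a planar domain and $\Sigma_Y$ consists of $C^{1,\alpha}$ curves along which three half-planes meet at $120^\circ$ with matched tangent planes, a connectedness/monodromy argument shows each edge in $\Sigma_Y$ is actually a straight line (a $C^{1,\alpha}$ curve all of whose ambient tangent lines are forced to be constant because the adjacent flat sheets cannot bend), hence $\Sigma_Y$ is a disjoint union of segments, rays, and lines, and $\Sigma_T$ (being isolated, with tangent cone $T$, and with all four emanating $Y$-edges straight) must consist of isolated points whose links are great-circle networks of the round $S^2$ of the specific combinatorial type of $T$. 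A short combinatorial/geometric case analysis of how finitely many flat sheets can be glued along straight edges, subject to no $T$ point / one $T$ point / two $T$ points, then produces the trichotomy: the product $N \times \mathbb{R}$ with $N$ a stationary network (when $\Sigma_T = \emptyset$), the cone $T$ itself (one $T$-point), or the doubled-$T$ configuration of Figure~\ref{fig:flat-tplanes} (two $T$-points). I expect the main obstacle to be Step~2's boundary-term analysis at the singular edges: making rigorous that the logarithmic cutoff and Simons arguments go through with competitors that are merely Lipschitz across $\Sigma_Y$ — in particular verifying the zero-capacity of $\Sigma_Y \cup \Sigma_T$ in $\Sigma$ and the exact cancellation of co-normal boundary integrals for both the stability quadratic form and the Simons inequality — rather than the (by now standard) Euclidean PDE estimates themselves.
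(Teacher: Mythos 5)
The part of your argument that actually addresses the corollary is where the gap lies. You list ``no $T$ point / one $T$ point / two $T$ points'' as the cases and assert that a short combinatorial analysis produces the trichotomy, but the main content of the last clause of the corollary is precisely that no other case can occur: you never rule out $\#\Sigma_T\ge 3$, nor do you supply the mechanism forcing the two-point configuration to be the one of Figure \ref{fig:flat-tplanes}, or the $\Sigma_T=\emptyset$ case to split as $N\times\mathbb{R}$. The paper's proof hinges on a quantitative observation absent from your sketch: once $\mathrm{reg}(\Sigma)$ is flat and every $L\in\mathcal{L}$ is straight, each face $\Lambda\in\mathcal{A}$ is a planar generalized polygon whose vertices are $T$-points and whose interior angle at every vertex is the fixed tetrahedral angle $\arccos(-\tfrac{1}{3})$; this angle is incompatible with any closed polygon and the exterior turning it forces allows at most two vertices, so each face is unbounded, has at most three sides, and at most one bounded side. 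That is exactly what excludes $\#\Sigma_T\ge 3$: two segments $\overline{p_1p_2},\overline{p_2p_3}\in\mathcal{L}$ meeting at a $T$-point would lie on the boundary of a common face, which would then have two bounded sides. The same facet classification (half-plane, strip, angular sector, three-sided region), together with connectedness via the maximum principle (or else $\Sigma$ is a union of disjoint parallel planes), is what yields $\Sigma=N\times\mathbb{R}$ when some edge is a full line, the cone $T$ when $\Sigma_T$ is a single point, and the glued two-$T$ picture when $\Sigma_T$ has two points. Without a substitute for this angle/side count, ``a short combinatorial/geometric case analysis'' is an assertion, not a proof.

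A secondary issue: since you chose to re-derive Theorem \ref{thm_main_Plat} rather than quote it, be aware that your Step 1 is incorrect as stated. $\Sigma_Y$ has codimension one in $\Sigma$, so it does not have zero $2$-capacity (one-dimensional sets in a surface are not removable for $W^{1,2}$ test functions), and the stability inequality only applies to functions induced by ambient vector fields, i.e.\ satisfying the compatibility condition $\sum_{i=1}^{3}\mathrm{sign}^i_L f^i_L=0$ along each $Y$-edge, which the constant function $1$ never satisfies; likewise the edge terms in the second variation do not cancel for arbitrary Lipschitz $\phi$. The paper avoids exactly this by using orientability to build the locally constant compatible functions $f^{\Omega}$ (values $\pm1,0$ according to the components of $\mathbb{R}^3\setminus\Sigma$), multiplying by the logarithmic cutoff, and summing over all components so that the edge contribution reduces to $\sum_{i}\tau^i_L=0$ by stationarity; no Simons inequality or iteration is needed. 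If you keep your route, you must either restrict to compatible competitors of this kind or rigorously justify the claimed boundary cancellation, which as written would fail.
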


\begin{figure}[ht]
    \centering
	\begingroup
	\fontsize{7pt}{12pt}
	\def\svgwidth{0.6\columnwidth}
\begingroup%
  \makeatletter%
  \providecommand\color[2][]{%
    \errmessage{(Inkscape) Color is used for the text in Inkscape, but the package 'color.sty' is not loaded}%
    \renewcommand\color[2][]{}%
  }%
  \providecommand\transparent[1]{%
    \errmessage{(Inkscape) Transparency is used (non-zero) for the text in Inkscape, but the package 'transparent.sty' is not loaded}%
    \renewcommand\transparent[1]{}%
  }%
  \providecommand\rotatebox[2]{#2}%
  \newcommand*\fsize{\dimexpr\f@size pt\relax}%
  \newcommand*\lineheight[1]{\fontsize{\fsize}{#1\fsize}\selectfont}%
  \ifx\svgwidth\undefined%
    \setlength{\unitlength}{680.31496063bp}%
    \ifx\svgscale\undefined%
      \relax%
    \else%
      \setlength{\unitlength}{\unitlength * \real{\svgscale}}%
    \fi%
  \else%
    \setlength{\unitlength}{\svgwidth}%
  \fi%
  \global\let\svgwidth\undefined%
  \global\let\svgscale\undefined%
  \makeatother%
  \begin{picture}(1,0.5)%
    \lineheight{1}%
    \setlength\tabcolsep{0pt}%
    \put(0,0){\includegraphics[width=\unitlength,page=1]{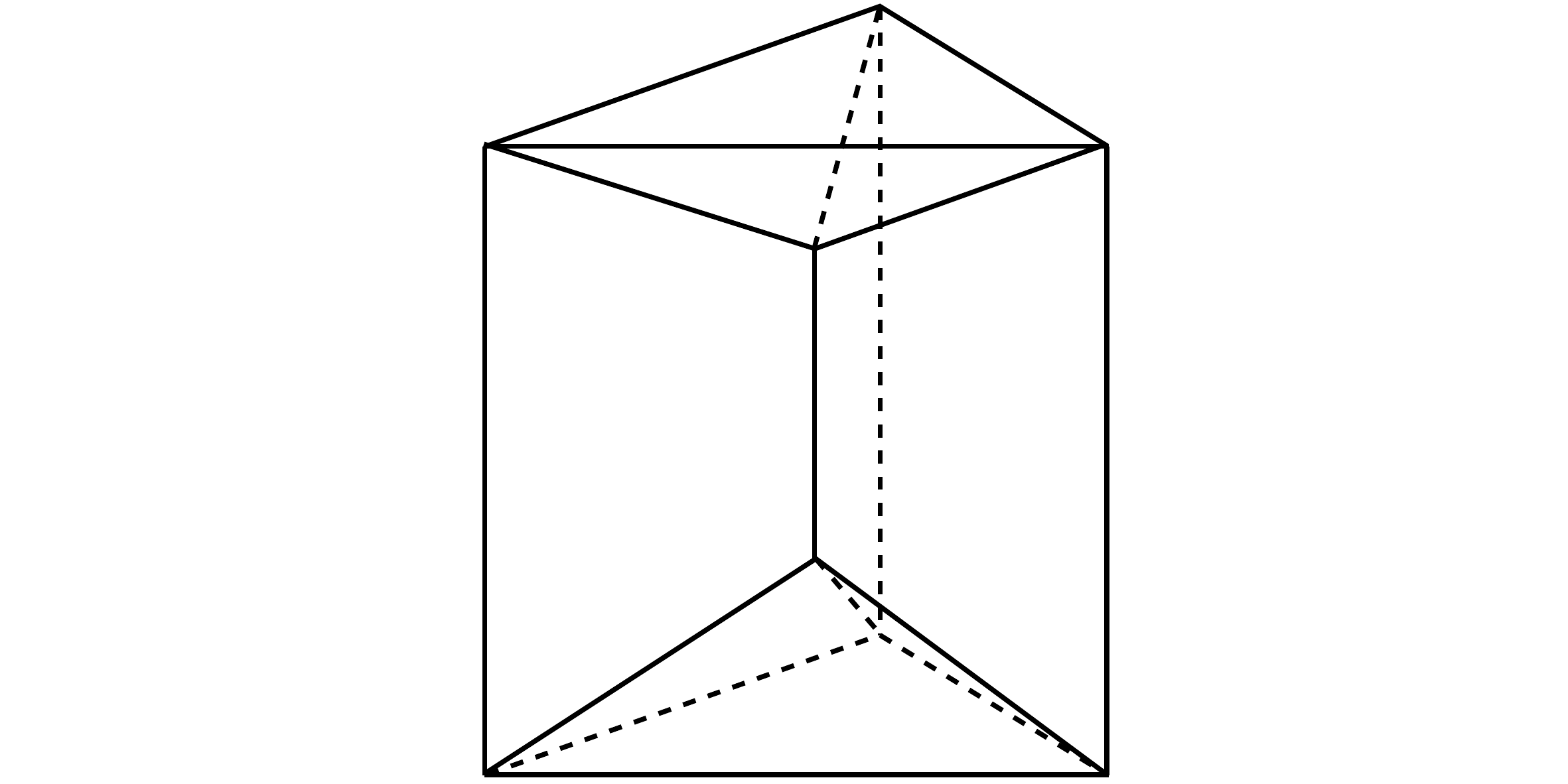}}%
    \put(0.47911615,0.31903352){\color[rgb]{0,0,0}\makebox(0,0)[lt]{\lineheight{0}\smash{\begin{tabular}[t]{l}$p_1$\end{tabular}}}}%
    \put(0.48059948,0.14845035){\color[rgb]{0,0,0}\makebox(0,0)[lt]{\lineheight{0}\smash{\begin{tabular}[t]{l}$p_2$\end{tabular}}}}%
  \end{picture}%
\endgroup%

	\endgroup

    \caption{A flat $\Sigma$ with $\Sigma_T=\left\{ p_1,p_2 \right\}$}
    \label{fig:flat-tplanes}
\end{figure}

Theorem \ref{thm_main_Plat} can be viewed as an extension of our previous work \cite{Wang2022curvature} in two aspects.
\begin{itemize}
	\item We allow $T$-type singularities.
	\item We can remove the restriction on $\Gamma$ appearing in \cite[Theorem 1]{Wang2022curvature}. See Appendix \ref{sec:appendix} for details.
\end{itemize}

Before the formal definition of Plateau surfaces in \cite{Bernstein2021}, Plateau proposed well-known laws (known as Plateau's laws) to describe the structure of soap films.
The definition of Plateau surfaces is just the mathematical way to describe those soap bubbles and soap films obeying Plateau's laws.

In 1976, Almgren \cite{almgren1976LipMinimizing} started studying closed sets which minimize Hausdorff measures with respect to local Lipschitz deformations, proving that they are smooth minimal surfaces out of a closed set of null area. In the same year the work of Taylor \cite{taylor1976structure} appeared with the sharp regularity in $\mathbb{R}^3$, and the singular set $\Sigma_Y$ and $\Sigma_T$ appeared. Her result justified Plateau's laws mathematically.
Using Taylor's result, Choe \cite{Choe1989fundamentalDomain} could obtain the regularity of the fundamental domains with the least boundary area.
In particular, singularities of $Y$-type and $T$-type appeared there.

Besides, $Y$-type and $T$-type singularities are also quite natural in the sense of stationary varifolds.
If we know a stationary integral 2-varifold $V$ is sufficent close to $Y$ in a open ball $B_1(0)\subset \mathbb{R}^3 $ and we have an area bound like $\|V\|(B_1)<2$ and a density bound $\Theta(\|V\|,0)\ge \frac{3}{2}$, then by Simon's celebrated paper \cite{Simon1993}, we know $\mathrm{spt}\|V\|$ is indeed a $C^{1,\alpha}$ perturbation of $Y$ in a smaller ball $B_{\frac{1}{2}}(0)$.
This result has been extended to the case of polyhedral cones by Colombo, Edelen, and Spolaor \cite{Edelen2022PolyhedralCone}.
Roughly speaking, we can get if a stationary integral 2-varifold $V$ is sufficient close to $T$ in $B_1(0)$ and has a suitable area and density bounds, then we can conclude $\mathrm{spt}\|V\|$ is a $C^{1,\alpha}$ perturbation of $T$ in $B_{\frac{1}{2}}(0)$.

Inspired by those minimizing properties and stationary properties for minimal Plateau surfaces, we expect there are any other results which can be extended to minimal Plateau surfaces to give some positive answers to Problem \ref{prob_Bern}.
Of course, Bernstein and Maggi \cite{Bernstein2021} showed they can get the rigidity of the $Y$-shaped catenoid.

On the other hand, there are indeed some other properties held for minimal triple junction surfaces compared with the usual minimal surfaces.
For example, Mese and Yamada \cite{Mese2006} showed they could solve the singular version of the Plateau problem using mapping methods in some cases.

These results motivate us to seek whether stable minimal Plateau surfaces have similar results like curvature estimate and generalized Bernstein theorem for stable minimal surfaces.
In my PhD thesis \cite{thesis}, we studied the triple junction surfaces intrinsically and showed we can solve some particular type of elliptic partial differential equations on it.
After that, we find several concepts like the Morse index are also well-defined on triple junction surfaces.


For usual stable minimal hypersurfaces, we know that the generalized Bernstein Theorem is equivalent to some curvature estimates via a standard blow-up argument.
But for stable minimal Plateau surfaces, things get complicated.
This is because even if we assume the second fundamental form of each surface is uniformly bounded, we cannot make sure any sequence of minimal Plateau surfaces has a convergence subsequence.
For example, if we only consider the triple junction surfaces shown in Figure \ref{fig:degenerate}, we find some of $\Sigma^i$ may become degenerate after taking limits.
\begin{figure}[ht]
    \centering
	\begingroup
	\fontsize{7pt}{12pt}
	\def\svgwidth{0.5\columnwidth}
\begingroup%
  \makeatletter%
  \providecommand\color[2][]{%
    \errmessage{(Inkscape) Color is used for the text in Inkscape, but the package 'color.sty' is not loaded}%
    \renewcommand\color[2][]{}%
  }%
  \providecommand\transparent[1]{%
    \errmessage{(Inkscape) Transparency is used (non-zero) for the text in Inkscape, but the package 'transparent.sty' is not loaded}%
    \renewcommand\transparent[1]{}%
  }%
  \providecommand\rotatebox[2]{#2}%
  \newcommand*\fsize{\dimexpr\f@size pt\relax}%
  \newcommand*\lineheight[1]{\fontsize{\fsize}{#1\fsize}\selectfont}%
  \ifx\svgwidth\undefined%
    \setlength{\unitlength}{680.31496063bp}%
    \ifx\svgscale\undefined%
      \relax%
    \else%
      \setlength{\unitlength}{\unitlength * \real{\svgscale}}%
    \fi%
  \else%
    \setlength{\unitlength}{\svgwidth}%
  \fi%
  \global\let\svgwidth\undefined%
  \global\let\svgscale\undefined%
  \makeatother%
  \begin{picture}(1,0.5)%
    \lineheight{1}%
    \setlength\tabcolsep{0pt}%
    \put(0,0){\includegraphics[width=\unitlength,page=1]{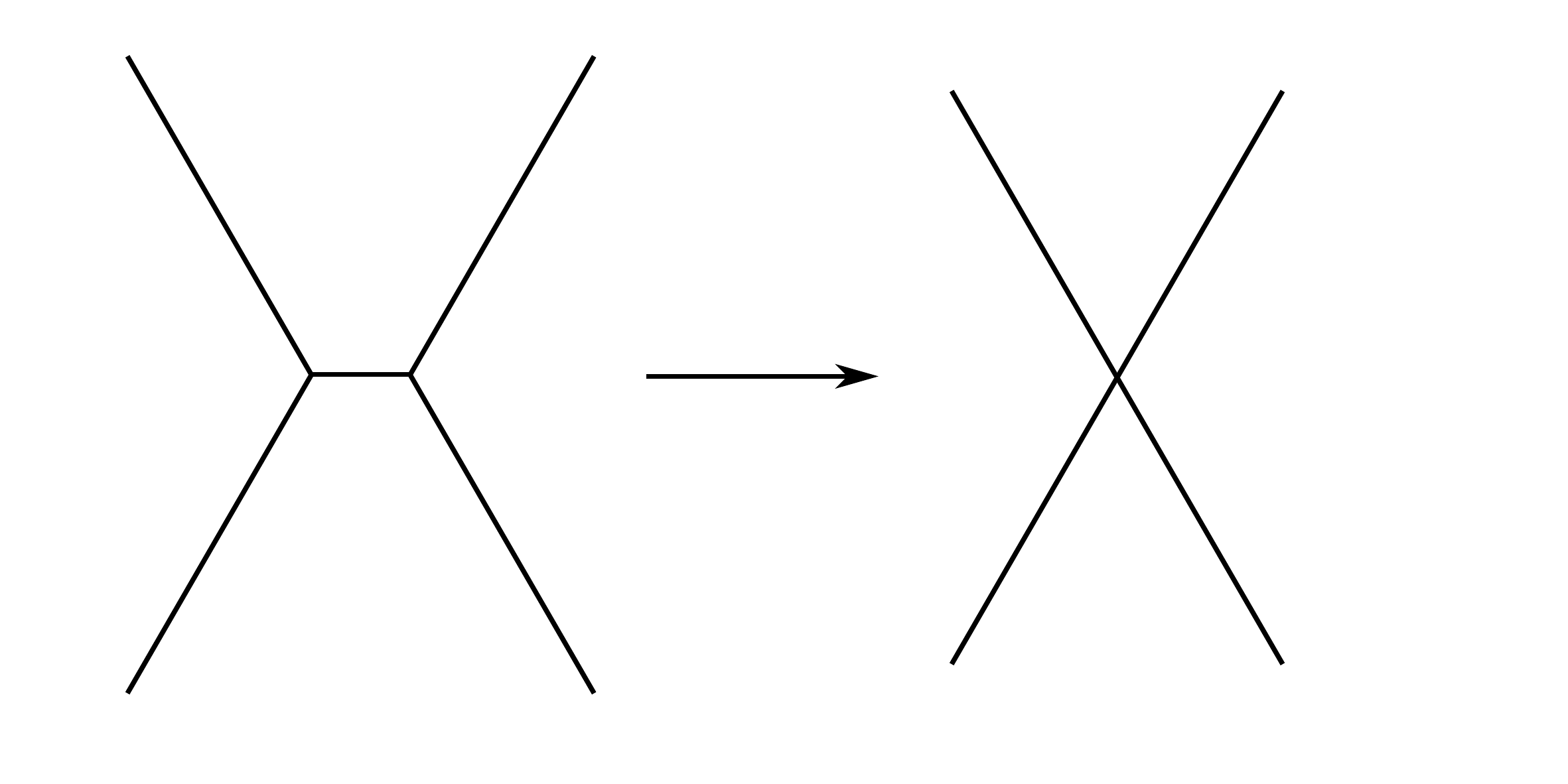}}%
    \put(0.2087946,0.26989956){\color[rgb]{0,0,0}\makebox(0,0)[lt]{\lineheight{0}\smash{\begin{tabular}[t]{l}$\Sigma^i$\end{tabular}}}}%
    \put(0,0){\includegraphics[width=\unitlength,page=2]{degenerate.pdf}}%
    \put(0.76660879,0.28374563){\color[rgb]{0,0,0}\makebox(0,0)[lt]{\lineheight{0}\smash{\begin{tabular}[t]{l}$\Sigma^i$, degenerate\end{tabular}}}}%
  \end{picture}%
\endgroup%

	\endgroup

    \caption{$\Sigma^i$ is degenerate after taking limit}
    \label{fig:degenerate}
\end{figure}

Therefore, we cannot directly get a pointwise curvature estimate for stable multiple junction surfaces.
At least, we need a very careful description of the limit when some of $\Sigma^i$ become degenerate.
We hope the works like \cite{Schoen1983} and \cite{Simon1993} may be useful in the later studying of curvature estimates for stable triple junction surfaces.

The organization of the paper is as follows.
In Section \ref{sec:preliminary}, we fix several notations and review some basic properties of minimal Plateau surfaces.
In Section \ref{sec:variations_of_plateau_surfaces}, we define some function spaces arising from the variations and give the second variation formula for minimal Plateau surfaces.
In Section \ref{sec:main_theoreem}, we can prove our main theorem by using logarithmic cutoff functions.

\section{Notations}%
\label{sec:preliminary}

It is convenient to introduce a class of generalized surfaces in $U$ to describe the variation of minimal Plateau surfaces.

\begin{definition}
	\label{def_general_Plateau}
	Given an open set $U\subset \mathbb{R}^3 $, we say $\Sigma$ is \textit{a generalized surface} in $U$ and write $\Sigma \in \mathcal{S}(U)$ if $\Sigma$ is (relatively) closed in $U$ and for some $\alpha \in (0,1)$ and any $p \in U$, there is an $r>0$ and a $C^{1,\alpha}$ diffeomorphism $\phi:B_r(p)\rightarrow \mathbb{R}^3$ such that $\phi(\Sigma\cap B_r(p))=\boldsymbol{C}\cap B_r(p)$ for some $\boldsymbol{C}=P,Y$ or $T$.

	We say $\Sigma \subset \mathcal{S}(U)$ is \textit{complete} if $U=\mathbb{R}^3$.

\end{definition}

Note that the Plateau surfaces are in the class $\mathcal{S}(U)$.


\begin{definition}
	\label{def_cell_stru}
	We say that $\Sigma \in \mathcal{S}(U)$ is \textit{orientable} in $U$ if each connected component $V\subset U\backslash \Sigma$ satisfies the following condition.
	\begin{itemize}
		\item For each $p \in \Sigma$, there is a $\rho>0$ with $B_\rho(p)\subset U$ and for all $0<r<\rho$, we have $B_r(p)\cap V$ is connected.
	\end{itemize}
\end{definition}

Note that Definition \ref{def_cell_stru} is closely related to the notion of "$\Sigma$ having a cell structure in $U$", which was introduced in \cite[Section 1.4]{Bernstein2021} to define a concept of orientability on non-smooth surfaces.
With respect to that definition, the notion of orientable surface we propose is different in that we are not assuming $U\backslash\Sigma$ to have finitely many connected components.

\begin{definition}
	\label{def_quadratic}
	We say a complete generalized surface $\Sigma \in \mathcal{S}(U)$ has at most \textit{quadratic area growth} if there exists a positive constant $C$ such that for any $r>0$ and $p \in \Sigma$, we have
	\[
		\left|\Sigma \cap B_r(p)\right|\le Cr^2.
	\]
\end{definition}
Here, we will use $\left|\Sigma\right|$ to denote the area of $\Sigma$. Precisely, $\left|\Sigma\right|:=\mathcal{H}^2(\Sigma)$ where $\mathcal{H}^n$ is the Hausdorff measure.

In the following part of this paper, we always assume $\Sigma \in \mathcal{S}(U)$ is orientable.
Note that we can define $\Sigma_Y,\Sigma_T$ for $\Sigma \in \mathcal{S}(U)$.
Precisely, we write,
\begin{align*}
	\Sigma_Y:={} & \left\{ p \in \Sigma:T_p \Sigma\text{ is homeomorphic to $Y$} \right\},\\
	\Sigma_T:={} & \left\{ p \in \Sigma:T_p \Sigma\text{ is homeomorphic to $T$} \right\},\\
	\mathcal{A}:={} & \left\{ \Lambda \subset \mathrm{reg}\Sigma:\Lambda \text{ is a connected component of }\mathrm{reg}\Sigma \right\},\\
	\mathcal{L}:={} & \left\{ L \subset \Sigma_Y: L  \text{ is a connected component of }\Sigma_Y\right\}.
\end{align*}

For each $\Lambda \in \mathcal{A}$, we know $\Lambda \subset U$ is a $C^{1,\alpha}$ surface with piecewise $C^{1,\alpha}$ boundaries.
We write $\partial \Lambda=\overline{\Lambda}\cap U$.
We say $\nu$ is a unit normal vector field on $\Sigma$ if we can write $\nu=(\nu_\Lambda)_{\Lambda \in \mathcal{A}}$ and each $\nu_\Lambda$ is a unit normal vector field on $\Lambda$ for each $\Lambda \in \mathcal{A}$.
Similarly, we can define the following geometric quantities and functional spaces on $\Sigma$ as follows.
\begin{itemize}
	\item $A:=(A_\Lambda)_{\Lambda \in \mathcal{A}}$ denotes the second fundamental form on $\Sigma$ where $A_\Lambda$ is the second fundamental form on $\Lambda$.
	\item $H:=(H_\Lambda)_{\Lambda \in \mathcal{A}}$ denote the mean curvature of $\Sigma$ with respect to the normal vector field $\nu$.
		We choose the sign of $H_\Lambda$ such that the unit sphere has mean curvature 2 with respect to the unit normal vector field pointing inside of the sphere.
	\item $C^{k,\alpha}(\Sigma):=\{ f=(f_{\Lambda})_{\Lambda\in \mathcal{A}}:f_\Lambda \in C^{k,\alpha}(\overline{\Lambda}) \}$ denotes the $C^{k,\alpha}$ function space on $\Sigma$.
	\item $W^{k,p}(\Sigma):=\{ f=(f_{\Lambda})_{\Lambda\in \mathcal{A}}:f_\Lambda \in W^{k,p}(\overline{\Lambda}) \}$ denotes the Sobolev spaces on $\Sigma$. 
	\item $C^{k,\alpha}(\Sigma,\mathbb{R}^3 ):=\{ X=(X_p)_{p \in \Sigma}:X_p \in T_p \mathbb{R}^3 $ and $X\lfloor(\overline{\Lambda}) $ is a $C^{k,\alpha}$ differentiable vector field on $ \overline{\Lambda} $ for each $\Lambda \in \mathcal{A}\}$ denotes the set of all $C^{k,\alpha}$ vector fields on $\Sigma$.
\end{itemize}

For any $L \in \mathcal{L}$, we write $\left\{ \Lambda_L^1,\Lambda_L^2,\Lambda_L^3 \right\}$ as the collection of $\Lambda \in \mathcal{A}$ with $L \subset \partial \Sigma^i_L$ for $i=1,2,3$.
Note that for each $\Lambda \in \mathcal{A}$, $\Lambda$ is a $C^{1,\alpha}$ surface with piecewise $C^{1,\alpha}$ boundaries and $\Lambda^i_L$ is the surface that has $L$ as one of the boundary curves.
For any $f \in C^{k}(\Sigma)$ or $f \in W^{k,p}(\Sigma)$, we write $f_L^i=f_{\Lambda^i_L}$ for any $L \in \mathcal{L}$ and $1\le i\le 3$.
We also write $\nu^i_L=\nu _{\Lambda^i_L}$.

Similarly, for any $Q \in \Sigma_T$, we write $\left\{ L_Q^1,L_Q^2,L_Q^3,L_Q^4 \right\}$ as the collection of $L \in \mathcal{L}$ having $Q$ as its endpoints.

\begin{remark}
	Note that $\Lambda_L^i$ cannot be the same with $\Lambda_L^j$ for $i\neq j$ if we assume $\Sigma$ is orientable.
	Similarly, $L^i_Q, L^j_Q$ are all disjoint curves if $i\neq j$ for any $Q \in \Sigma_T$.
\end{remark}

\section{Variations of minimal Plateau surfaces}%
\label{sec:variations_of_plateau_surfaces}

\subsection{Function spaces araising from variations}%
\label{sub:function_spaces_araising_from_variations}

Before talking about the first variation formula for $\Sigma \in \mathcal{S}(U)$, let us consider the definitions of variations and related function spaces.

\begin{definition}
	\label{def_variation}
	Given an open set $U\subset \mathbb{R}^3 $, for any $\Sigma \subset \mathcal{S}(U)$, we say $\Sigma_t=\varphi_t(\Sigma)$, $t \in (-\varepsilon,\varepsilon)$ is a $C^{1,\alpha}$ variation of $\Sigma$ if $\varphi_t$ is a family of $C^{1,\alpha}$ diffeomorphisms supported in $U$ such that $\varphi_0$ is an identity.

	We say $\varphi_t$ has compact support if $\varphi_t$ is an identity outside a compact domain $W\subset U$ for any $t$.
\end{definition}

We write $V(x):=\frac{d}{dt}|_{t=0}\varphi_t(x)$ as the variational vector field on $\Sigma$ for the variation $\Sigma_t$ and we can find $V \in C^{1,\alpha}(\Sigma,\mathbb{R}^3 )$.
Note that if $V \in C^{1,\alpha}(\Sigma,\mathbb{R}^3 )$, then the function $f$ defined by
\[
	f:=(f_\Lambda)_{\Lambda \in \mathcal{A}}, f_\Lambda=V \cdot \nu_\Lambda, \text{ for any }\Lambda \in \mathcal{A},
\]
is a function in $C^{1,\alpha}(\Sigma)$.
Here, $V\cdot \nu_\Lambda$ means the inner product in $\mathbb{R}^3$.
We write $f=V\cdot \nu$ for short.
Note that not all $f \in C^{1,\alpha}(\Sigma)$
 can be written as $f=V\cdot \nu$ for some $V \in C^{1,\alpha}(\Sigma)$.
We define a subspace of $C^{1,\alpha}(\Sigma)$ as follows.
\begin{definition}
	\label{def_compatible}
	We say a function $f \in C^{1,\alpha}(\Sigma)$ satisfies the \textit{compatible condition} if $f$ can be written as $f=V\cdot \nu$ for some $V \in C^{1,\alpha}(\Sigma,\mathbb{R}^3)$.
	We use $C^{1,\alpha}_2(\Sigma)$ to denote the set of all $f \in C^{1,\alpha}(\Sigma)$ satisfying the compatible condition.
\end{definition}

\begin{remark}
	Another important function space is defined as
	\[
		C_1^{1,\alpha}(\Sigma):=\left\{ f \in C^{1,\alpha}(\Sigma):f^i_L=f^j_L, 1\le i,j\le 3 \text{ for each }L \in \mathcal{L}\right\}.
	\]
	This function space is usually related to the restriction of the functions to $\Sigma$.
	In particular, it plays an important role when we want to construct triple junction surfaces.
	These spaces are generalized in \cite{thesis}.
\end{remark}


Indeed, we have different definitions of compatible conditions.
Before that, we need to define a sign function to describe the orientation of $\Sigma$.
Note that the sign functions have been defined in my thesis \cite{thesis} for triple junction hypersurfaces.
We extend its definition to generalized surface $\Sigma \in \mathcal{S}(U)$ here.

For each $L \in \mathcal{L}$, we choose an oritentation on $L$. 
Then we define the sign function $\mathrm{sign}=(\mathrm{sign}_L)_{L \in \mathcal{L}}$ with $\mathrm{sign}_L=(\mathrm{sign}^1_L,\mathrm{sign}_L^2,\mathrm{sign}_L^3)\in C^{\infty}(L,\mathbb{R}^3)$ and we define $\mathrm{sign}^i_L$ to be a constant function on $L$ such that $\mathrm{sign}^i_L\equiv 1$ if the orientation on $L$ agrees with the induced orientation from $\Lambda_L^i$ and its orientation $\nu^i_L$.
We define $\mathrm{sign}^i_L \equiv -1$ otherwise.
Note that by our definition of sign function and fact $\sum_{i =1}^{3}\tau^i_L=0$ on $L$, we have $\sum_{i =1}^{3}\mathrm{sign}^i_L(p)\nu_L^i(p)=0$ for any $p \in L$.


\begin{proposition}
	\label{prop_equi_comp}
	The function $f \in C^{1,\alpha}(\Sigma)$ satisfies the compatible condition if and only if $f$ satisfies
	\[
		\sum_{i =1}^{3}\mathrm{sign}^i_L(p)f^i_{L}(p)=0, \text{ for any }p \in L, \text{ and any }L \in \mathcal{L}.
	\]
\end{proposition}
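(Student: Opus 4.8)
The plan is to prove the two implications separately. The ``only if'' direction is immediate: if $f=V\cdot\nu$ for some $V\in C^{1,\alpha}(\Sigma,\mathbb{R}^3)$, then at any point $p\in L$ the vector $V_p\in T_p\mathbb{R}^3$ is single-valued and $f^i_L(p)=V_p\cdot\nu^i_L(p)$ for $i=1,2,3$, so $\sum_{i=1}^3\mathrm{sign}^i_L(p)f^i_L(p)=V_p\cdot\bigl(\sum_{i=1}^3\mathrm{sign}^i_L(p)\nu^i_L(p)\bigr)=0$ by the identity $\sum_{i=1}^3\mathrm{sign}^i_L\nu^i_L=0$ recorded just above the statement.

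For the converse I would argue locally and then patch with a partition of unity. The key point is that around each $p_0\in\Sigma$, choosing $r_0>0$ small so that $\Sigma\cap B_{r_0}(p_0)$ is the model configuration at $p_0$ (Definition \ref{def_general_Plateau}), one can construct $V^{(p_0)}\in C^{1,\alpha}(\Sigma\cap B_{r_0}(p_0),\mathbb{R}^3)$ with $V^{(p_0)}\cdot\nu=f$. If $p_0\in\mathrm{reg}\,\Sigma$, take $V^{(p_0)}=f\nu$. If $p_0\in\Sigma_Y$ lies on the curve $L$ (with $r_0$ small enough that $B_{r_0}(p_0)\cap\Sigma_T=\emptyset$), the three unit normals $\nu^1_L(p),\nu^2_L(p),\nu^3_L(p)$ all lie in the plane $\tau_L(p)^\perp$ and satisfy the single linear relation $\sum_i\mathrm{sign}^i_L\nu^i_L=0$; hence the linear map $v\mapsto(v\cdot\nu^i_L(p))_{i=1,2,3}$ from $\mathbb{R}^3$ to $\mathbb{R}^3$ has kernel $\mathbb{R}\,\tau_L(p)$ and image exactly $\{(a_i):\sum_i\mathrm{sign}^i_L(p)a_i=0\}$. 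Thus, precisely because $f$ satisfies the stated condition, there is a unique $w_L(p)\in\tau_L(p)^\perp$ with $w_L(p)\cdot\nu^i_L(p)=f^i_L(p)$ for all $i$, depending on $p$ as regularly as the geometric data permits; one then extends $w_L$ off $L$ into each sector $\Lambda^i_L$ by writing $V^{(p_0)}|_{\Lambda^i_L}=f^i_L\nu^i_L+W_i$, where $W_i$ is a $C^{1,\alpha}$ tangential field on $\Lambda^i_L$ whose trace on $L$ is the tangential field $w_L-f^i_L\nu^i_L$. Such an extension of prescribed tangential boundary data exists by a routine collar argument, and by construction the three extensions agree along $L$, so $V^{(p_0)}$ is single-valued and of class $C^{1,\alpha}$ across $L$. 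Finally, if $p_0=Q\in\Sigma_T$, the six unit normals of the sectors adjacent to $Q$ span $\mathbb{R}^3$, so $v\mapsto(v\cdot\nu_\Lambda(Q))_{\Lambda\ni Q}$ has a three-dimensional cokernel; letting $p\to Q$ along each of the four edges $L^a_Q$, the stated condition produces four elements of this cokernel, and a direct computation on the model cone $T$ shows that they span it. Hence there is a $V_Q\in\mathbb{R}^3$ with $V_Q\cdot\nu_\Lambda(Q)=f_\Lambda(Q)$ for all six adjacent sectors, and one builds $V^{(Q)}$ by choosing, along each edge near $Q$, the solution of the three corresponding equations that interpolates from $V_Q$ at $Q$ (using the freedom to add multiples of the edge tangent) and then extending into the sectors as in the previous case.

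Patching is then straightforward: take a locally finite cover $\{B_{r_k}(p_k)\}$ of $\Sigma$ with a subordinate partition of unity $\{\psi_k\}$ and set $V:=\sum_k\psi_k V^{(p_k)}$. Then $V\cdot\nu=\sum_k\psi_k f=f$, $V$ restricts to a $C^{1,\alpha}$ vector field on each $\overline{\Lambda}$, and $V$ is single-valued and continuous across $\Sigma_Y\cup\Sigma_T$ because every summand is; so $V\in C^{1,\alpha}(\Sigma,\mathbb{R}^3)$ realizes $f$, which is what ``satisfies the compatible condition'' means.

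I expect the real obstacle to be the local model analysis at the singular strata: identifying the compatible condition with the exact solvability (Fredholm) condition for the pointwise linear system, which rests on $\sum_i\mathrm{sign}^i_L\nu^i_L=0$, and checking on the explicit cone $T$ that the four edge relations already force solvability at a $T$-point, so that no condition beyond the one on $\Sigma_Y$ need be imposed. Extending prescribed tangential boundary data to $C^{1,\alpha}$ tangential fields on each sector, with matching at the corners lying over $\Sigma_T$, is then routine bookkeeping.
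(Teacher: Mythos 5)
Your proposal is correct in substance and follows the same global skeleton as the paper (the ``only if'' direction via $\sum_{i}\mathrm{sign}^i_L\nu^i_L=0$ is identical, and the converse is local construction plus partition of unity), but the local step is executed by a genuinely different mechanism. The paper writes down explicit symmetric solutions -- $V=\frac{2}{3}\sum_{i=1}^{3}f^i_L\nu^i_L$ along $\Sigma_Y$ and $V=\frac{1}{2}\sum_{1\le i<j\le 4}f^{ij}_p\nu^{ij}_p$ at a $T$-point -- and verifies $V\cdot\nu=f$ by a direct computation with the exact inner products of the model cones' normals; the compatible condition enters only through the edge relations. You instead run a duality/solvability argument: at a $Y$-point the evaluation map $v\mapsto(v\cdot\nu^i_L)_i$ has image exactly the hyperplane cut out by the sign relation, and at a $T$-point the four edge functionals span the full $3$-dimensional annihilator of the (injective) evaluation map $\mathbb{R}^3\to\mathbb{R}^6$, so compatibility along the four edges forces solvability at $Q$ by continuity. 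Both claims are true (the second amounts to checking that the $4\times 6$ sign matrix of the cone $T$ has rank exactly $3$, which is the computation the paper replaces by its explicit formula), but note that you defer precisely the two nontrivial ingredients -- this rank computation on $T$, and the $C^{1,\alpha}$ extension of prescribed tangential traces into each sector with matching at the corner over $\Sigma_T$ -- and a complete write-up must carry them out; the corner extension needs a word about first-order compatibility at $Q$, which is arrangeable since the two edge tangents there are independent. What your route buys is a cleaner gluing: your ansatz $V|_{\Lambda}=f_\Lambda\nu_\Lambda+(\text{tangential correction})$ makes $V\cdot\nu=f$ hold identically on each local patch, so $\sum_k\psi_k V^{(p_k)}$ satisfies the identity everywhere without further argument, whereas the paper's local fields satisfy it exactly only on the singular curves and implicitly rely on a final normal-direction correction (vanishing on $\Sigma_Y\cup\Sigma_T$) to conclude; what the paper's route buys is that the explicit averaged formulas dispense with any extension lemma or corner bookkeeping.
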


\begin{proof}
	If $f=V\cdot \nu$, we can easily find $\sum_{i =1}^{3}\mathrm{sign}^i_L(p)f^i_L(p)=0$ since we know that $\nu$ satisfies $\sum_{i =1}^{3}\mathrm{sign}^i_L \nu^i_L=0$.

	On the other hand, for any $f \in C^{1,\alpha}_2(\Sigma)$, we need to construct $V \in C^{1,\alpha}(\Sigma,\mathbb{R}^3)$ such that $f=V\cdot \nu$.

	We need to use the partition of unity to finish the construction.
	For any $p \in \Sigma$, we let $B_r(p)$ be the ball in Definition \ref{def_plateau} such that $\Sigma\cap B_r(p)$ is $C^{1,\alpha}$ diffeomorphic to $\boldsymbol{C}\cap B_r(p)$ by the diffeomorphism $\phi$ with $D\phi_p \in O(3)$ for $\boldsymbol{C}=P,Y$ or $T$.
	In the later construction, we will extend $\nu_\Lambda$ to a $C^{1,\alpha}$ vector field and $f_\Lambda$ a $C^{1,\alpha}$ function near $\Lambda$ if necessary.

	Firstly, if $p \in \Lambda$ for some $\Lambda \in \mathcal{A}$, we can choose $V=f_\Lambda \nu_\Lambda$. Secondly, if $p \in L$ for some $L \in \mathcal{L}$, we can choose $V=\frac{2}{3} \sum_{i =1}^{3}f^i_L \nu^i_L$. This time we need to verify $f^i_L=V\cdot\nu^i_L$.
	Note that by the definition of sign functions, we have $\nu^i_L \cdot \nu^j_L=-\frac{1}{2}\mathrm{sign}^i_L \mathrm{sign}^j_L$ if $i\neq j$.
	Hence, using the compatible condition $\sum_{i =1}^{3}f^i_L \mathrm{sign}^i_L=0$, we have,
	\begin{align*}
		V\cdot \nu^j_L={}& \frac{2}{3}\sum_{i =1}^{3}f_L^i \nu_L^i \cdot \nu_L^j=\frac{2}{3}f^j_L-\frac{1}{3}\sum_{i \neq j}^{}f^i_L \mathrm{sign}^i_L\mathrm{sign}^j_L\\
		={}&f^j_L - \frac{1}{3}\mathrm{sign}^j_L\sum_{i =1}^{3} \mathrm{sign}^i_L
		f^i_L=f^j_L.
	\end{align*}

	At last, we suppose $p \in \Sigma_T$. We use $\Lambda^{ij}_p$ to denote the $\Lambda \in \mathcal{A}$ such that $L^i_p\cup L^j_p \subset \partial \Lambda$.
	We write $f^{ij}_p=f_{\Lambda^{ij}_p}$ and $\nu^{ij}_p=\nu_{\Lambda^{ij}_p}$.
	In particular, we write the sign function $\mathrm{sign}^{ij}_p:=\mathrm{sign}^i_{L^j_p}$, which is the sign function defined on $L^j_p$ induced by surface $\Lambda^{ij}_p$ and its norm $\nu^{ij}_p$.
	Hence, the compatible condition implies $\sum_{1\le i\neq j \le 4}^{}f^{ij}_p \mathrm{sign}^{ij}_p=0$ along $L_p^j$ for each $j$.
	(Note that we know $f^{ij}_p=f^{ji}_p,\nu^{ij}_p=\nu^{jl}_p$ but $\mathrm{sign}^{ij}_p$ are different from $\mathrm{sign}^{ji}_p$ for $i\neq j$.)
	Now we define
	\[
		V=\frac{1}{2}\sum_{1\le i< j\le 4 }^{}f^{ij}_p\nu^{ij}_p.
	\]

	We note the inner product between $\nu^{ij}_p, \nu^{kl}_{p}$ is given by
	\[
		\nu^{ij}_p\cdot \nu^{kl}_{p}=
		\begin{cases}
		1, & (i,j)=(k,l) \text{ or }(l,k), \\
		-\frac{1}{2}\mathrm{sign}^{ji}_p \mathrm{sign}^{li}_p, & i=k,\\
		0, & (i,j,k,l) \text{ is a permutation of }(1,2,3,4),
		\end{cases}
	\]
	using the definition of sign functions.
	On the other hand, we can use the compatible condition to get
	\begin{align*}
		V\cdot \nu^{kl}_p={} & \frac{1}{4}\sum_{1\le i\neq j\le 4 }^{}f^{ij}_p\nu^{ij}_p \cdot \nu^{kl}_p\\
		={}&\frac{1}{2}f^{kl}_p-\frac{1}{4}\sum_{ \substack{1\le i\le 4\\i\neq k,l} }^{} f^{ki}_p\mathrm{sign}^{ik}_p \mathrm{sign}^{lk}_p-\frac{1}{4}\sum_{ \substack{1\le i\le 4\\i\neq k,l} }^{} f^{li}_p\mathrm{sign}^{il}_p \mathrm{sign}^{kl}_p\\
		={} & \frac{1}{2}f^{kl}_p+\frac{1}{4}f^{lk}_p \mathrm{sign}^{lk}_p \mathrm{sign}^{kl}_p+\frac{1}{4}f^{lk}_p \mathrm{sign}^{kl}_p \mathrm{sign}^{kl}_p=f^{kl}_p.
	\end{align*}

	Now, we can use the partition of unity to glue them into a vector field $V$ near $\Sigma$ with $f=V\cdot \nu$.
\end{proof}

\begin{remark}
	Inspired by Proposition \ref{prop_equi_comp}, we can define the Sobolev space
	\[
		W^{k,p}_{2}(\Sigma):=\left\{ 
		f \in W^{k,p}(\Sigma):\sum_{i =1}^{3}\mathrm{sign}^i_L f^i_L|_{L}=0 \text{ a.e. on }L,\forall L \in \mathcal{L}\right\},
	\]
	where the restriction should be understood in a trace sense.
\end{remark}

At last, we would like to mention the locally constant functions on $\Sigma$ defined below.
\begin{definition}
	\label{def_const_fun}
	We say $f \in C_2^{1,\alpha}(\Sigma)$ is a locally constant function if $f_\Lambda$ is a constant for each $\Lambda \in \mathcal{A}$.
\end{definition}

Note that in general, the dimension of the space of all constant functions could be larger than 1.
In particular, we can construct some locally constant functions based on orientations.
Let $\Omega \subset U\backslash \Sigma$ be a connected component of $U\backslash \Sigma$, and we define $f=f^{\Omega}=(f^\Omega_\Lambda)_{\Lambda \in \mathcal{A}}$ by
\[
	f^\Omega_\Lambda:=
	\begin{cases}
	1, & \Lambda \subset \overline{\Omega} \text{ and }\nu_\Lambda \text{ pointing inward with respect to $\Omega$.}\\
	-1, & \Lambda \subset \overline{\Omega} \text{ and }\nu_\Lambda \text{ pointing outward with respect to $\Omega$.}\\
	0, & \Lambda \not\subset \overline{\Omega}.
	\end{cases}
\]

Using the definition of sign functions, it is easy to verify $f^\Omega_\Lambda \in C_2^{1,\alpha}(\Sigma)$.


\begin{remark}
	Using locally constant functions, we can describe the orientability in another way. That is, $\Sigma$ is orientable if and only if there exists a locally constant function $f \in C^{1,\alpha}_2(\Sigma)$ which is nonvanishing everywhere.
	Of course, we need to extend our definitions of those function spaces to non-orientable generalized surfaces.

	Here, we provide an example to explain why we call it orientable if we can find a nonvanishing locally constant function on $\Sigma$.
	Suppose $\Sigma_1,\Sigma_2$ are two squares $[0,1]\times [0,1]$ in $\mathbb{R}^2 $ and we glue their boundaries to get $\Sigma$.
	We write $L_1=\left\{ 0 \right\}\times [0,1]$ and $L_2=\left\{ 1 \right\}\times [0,1]$.
	If we glue $\Sigma_1,\Sigma_2$ showed in Figure \ref{fig:mobius-strip} to get a M\"obuis strip $\Sigma$, then the compatible condition on $\Sigma$ is given by $f_1=f_2$ on $L_1$ and $f_1=-f_2$ on $L_2$.
	It is impossible to find a nonvanishing locally constant function on $\Sigma$.
\begin{figure}[ht]
    \centering
	\begingroup
	\fontsize{7pt}{12pt}
	\def\svgwidth{0.6\columnwidth}
\begingroup%
  \makeatletter%
  \providecommand\color[2][]{%
    \errmessage{(Inkscape) Color is used for the text in Inkscape, but the package 'color.sty' is not loaded}%
    \renewcommand\color[2][]{}%
  }%
  \providecommand\transparent[1]{%
    \errmessage{(Inkscape) Transparency is used (non-zero) for the text in Inkscape, but the package 'transparent.sty' is not loaded}%
    \renewcommand\transparent[1]{}%
  }%
  \providecommand\rotatebox[2]{#2}%
  \newcommand*\fsize{\dimexpr\f@size pt\relax}%
  \newcommand*\lineheight[1]{\fontsize{\fsize}{#1\fsize}\selectfont}%
  \ifx\svgwidth\undefined%
    \setlength{\unitlength}{680.31496063bp}%
    \ifx\svgscale\undefined%
      \relax%
    \else%
      \setlength{\unitlength}{\unitlength * \real{\svgscale}}%
    \fi%
  \else%
    \setlength{\unitlength}{\svgwidth}%
  \fi%
  \global\let\svgwidth\undefined%
  \global\let\svgscale\undefined%
  \makeatother%
  \begin{picture}(1,0.5)%
    \lineheight{1}%
    \setlength\tabcolsep{0pt}%
    \put(0,0){\includegraphics[width=\unitlength,page=1]{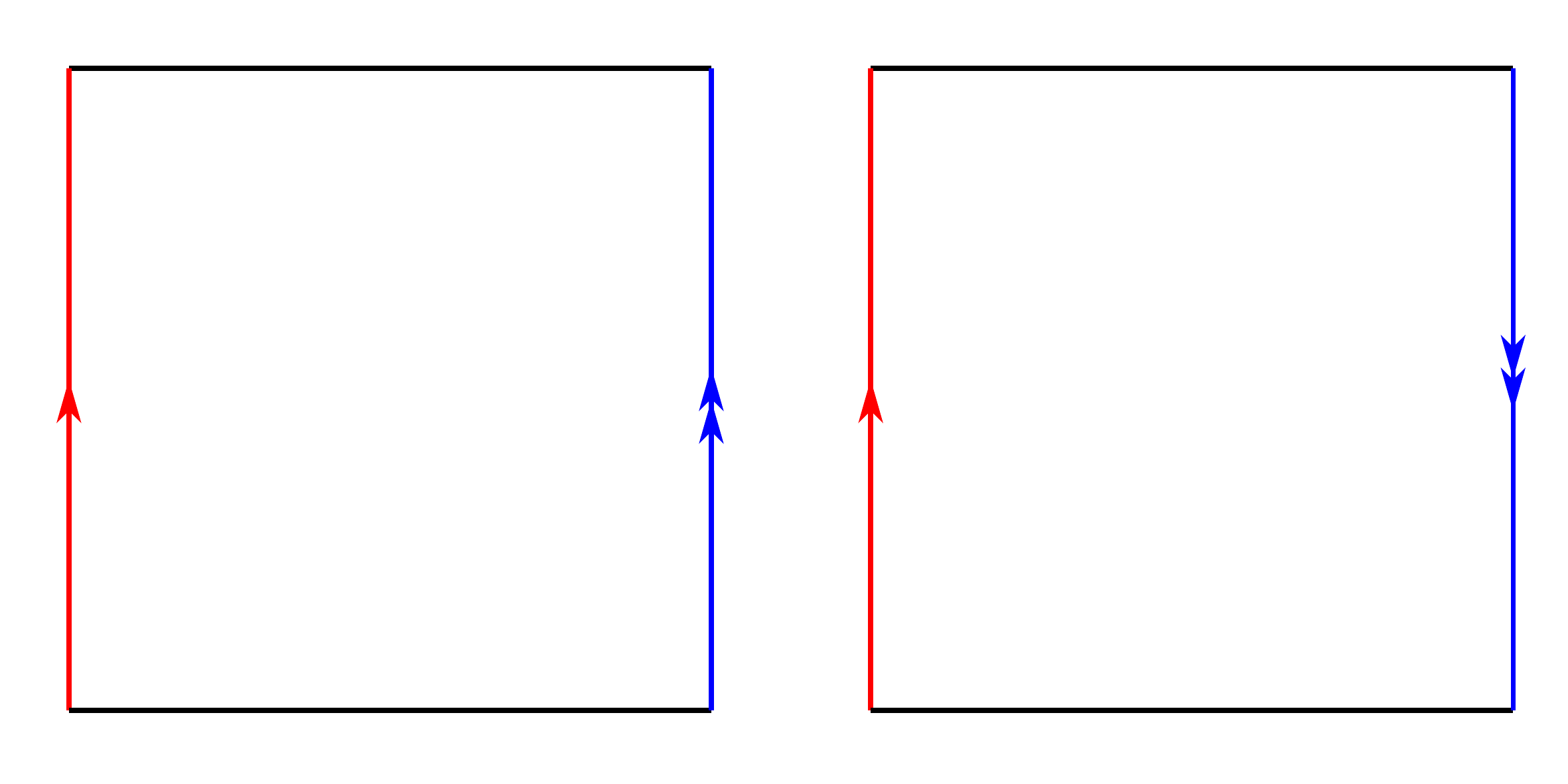}}%
    \put(0.15017982,0.26614855){\color[rgb]{0,0,0}\makebox(0,0)[lt]{\lineheight{0}\smash{\begin{tabular}[t]{l}$\Sigma_1$\end{tabular}}}}%
    \put(0.65435499,0.26614855){\color[rgb]{0,0,0}\makebox(0,0)[lt]{\lineheight{0}\smash{\begin{tabular}[t]{l}$\Sigma_2$\end{tabular}}}}%
    \put(0.0493448,0.11060512){\color[rgb]{0,0,0}\makebox(0,0)[lt]{\lineheight{0}\smash{\begin{tabular}[t]{l}$L_1$\end{tabular}}}}%
    \put(0.55995621,0.11060512){\color[rgb]{0,0,0}\makebox(0,0)[lt]{\lineheight{0}\smash{\begin{tabular}[t]{l}$L_1$\end{tabular}}}}%
    \put(0.40548553,0.11060512){\color[rgb]{0,0,0}\makebox(0,0)[lt]{\lineheight{0}\smash{\begin{tabular}[t]{l}$L_2$\end{tabular}}}}%
    \put(0.9214605,0.11060512){\color[rgb]{0,0,0}\makebox(0,0)[lt]{\lineheight{0}\smash{\begin{tabular}[t]{l}$L_2$\end{tabular}}}}%
  \end{picture}%
\endgroup%

	\endgroup

    \caption{M\"obius strip}
    \label{fig:mobius-strip}
\end{figure}
\end{remark}

\subsection{Second variation formula}%
\label{sub:second_variation_formula}

Given an open set $U \subset \mathbb{R}^3$, we suppose $\Sigma \in \mathcal{S}(U)$ and $\Sigma_t=\varphi_t(\Sigma)$ is a $C^{1,\alpha}$ variation of $\Sigma$ with compact support in $W\subset \subset U$.
Let $V\subset C^{1,\alpha}(\Sigma,\mathbb{R}^3 )$ be the variational vector field associated with variation $\Sigma_t$.
Then we can compute the variation of the area to get
\[
	\left.\frac{d}{dt}\right|_{t=0}
		\left|\Sigma_t\cap W\right|=-\sum_{\Lambda \in \mathcal{A} }^{}\int_{ \Lambda} H_\Lambda \phi d\mu_\Lambda+\sum_{L \in \mathcal{L} }^{}\sum_{i =1}^{3}\int_{ L} \tau^i_L\cdot V d\mu_L.
\]

Hence, we find $\Sigma$ is minimal if and only if $\Sigma$ is a critical point of its area functional.

Now let us recall the second variational formula that appeared in \cite{Wang2022curvature}.
Note that the computation is valid for minimal Plateau surfaces and hence we do not repeat that proof here.
Interested readers may refer to \cite{Wang2022curvature} for more details.

\begin{theorem}
	Let $\Sigma$ be a minimal Plateau surface in $U$ and $\Sigma_t$ the variation of $\Sigma$ with compact support in $W\subset \subset U$.
	Then, we have
	\[
		\left.\frac{d^2}{dt^2}\right|_{t=0}
			\!\!\!\!
			\left|\Sigma_t\cap W\right|\!=\!
			\sum_{\Lambda \in \mathcal{A} }^{}\!\int_{ \Lambda} \!\left|\nabla_\Lambda \phi_\Lambda\right|^2-\left|A_\Lambda\right|^2\!\phi_\Lambda^2d\mu_\Lambda-\sum_{L\in \mathcal{L}}^{}\!
			\sum_{i =1}^{3}\!\int_{ L}\! ( \phi^i_L )^2\boldsymbol{H}_L\cdot \tau^i_Ld\mu_L.
	\]
	where $\phi=V\cdot \nu$, $V$ is the variational vector field associated with variation $\Sigma_t$, and $\boldsymbol{H}_L$ is the geodesic curvature vector of $L$ in $\mathbb{R}^3$.
\end{theorem}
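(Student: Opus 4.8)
The plan is to obtain the formula by differentiating the first variation identity a second time, reducing every computation to the smooth sheets $\Lambda\in\mathcal{A}$ and the junction curves $L\in\mathcal{L}$; the $T$-points $\Sigma_T$ form a discrete set and enter none of the integrals, which is why the triple junction computation of \cite{Wang2022curvature} transfers verbatim. Since $\varphi_t$ has compact support in $W\subset\subset U$, for each $s$ the set $\Sigma_s:=\varphi_s(\Sigma)$ again lies in $\mathcal{S}(U)$, and near $t=s$ the family is the variation $\psi_\tau(\Sigma_s)$ with $\psi_\tau:=\varphi_{s+\tau}\circ\varphi_s^{-1}$; writing $W_s:=\frac{d}{d\tau}\big|_{\tau=0}\psi_\tau$ (so $W_0=V$), the first variation formula applied to $\Sigma_s$ gives
\[
	\frac{d}{dt}\Big|_{t=s}\left|\Sigma_t\cap W\right|=-\sum_{\Lambda}\int_{\Lambda_s}H_{\Lambda_s}\,(W_s\cdot\nu_{\Lambda_s})\,d\mu_{\Lambda_s}+\sum_{L}\sum_{i=1}^{3}\int_{L_s}\tau^i_{L,s}\cdot W_s\,d\mu_{L_s},
\]
and the second variation is its $s$-derivative at $s=0$; one may assume $\varphi_t$ to be $C^2$ in $t$, which is all the temporal regularity needed.

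For the area term, since $H_\Lambda\equiv0$ on the minimal surface $\Sigma$ only the factor $H_{\Lambda_s}$ survives differentiation, and by the first variation of the scalar mean curvature together with minimality its derivative along the variation is $\dot H_\Lambda=\Delta_\Lambda\phi_\Lambda+|A_\Lambda|^2\phi_\Lambda$ with $\phi_\Lambda=V\cdot\nu_\Lambda$; hence this term contributes $-\sum_\Lambda\int_\Lambda\dot H_\Lambda\phi_\Lambda$, and integrating by parts on each sheet — whose boundary, up to the discrete set $\Sigma_T$, is a union of curves $L\in\mathcal{L}$ each bounding exactly the three sheets $\Lambda^1_L,\Lambda^2_L,\Lambda^3_L$ — yields $\sum_\Lambda\int_\Lambda(|\nabla_\Lambda\phi_\Lambda|^2-|A_\Lambda|^2\phi_\Lambda^2)-\sum_L\sum_i\int_L\phi^i_L\,\partial_{\tau^i_L}\phi^i_L$. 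For the junction term, differentiating $\int_{L_s}\tau^i_{L,s}\cdot W_s\,d\mu_{L_s}$ at $s=0$ produces the conormal contribution $\dot\tau^i_L\cdot V$, a velocity contribution $\tau^i_L\cdot\frac{d}{ds}\big|_0W_s$, and a length-element contribution $(\tau^i_L\cdot V)\,\mathrm{div}_LV$; because $\varphi_s$ is a single ambient family, the latter two restrict to $L$ independently of the sheet and so sum to zero over $i$ by the balancing $\sum_i\tau^i_L=0$. Using the standard variation formulas for the unit tangent of $L$ and for $\nu^i_L$, and decomposing $V=(V\cdot e)e+(V\cdot\tau^i_L)\tau^i_L+\phi^i_L\nu^i_L$ along $L$ with $e$ the unit tangent to $L$, one computes
\[
	\dot\tau^i_L=-(\tau^i_L\cdot\nabla_eV)\,e+\Big(\partial_{\tau^i_L}\phi^i_L+(V\cdot e)\,\mathrm{II}_{\Lambda^i}(e,\tau^i_L)-(V\cdot\tau^i_L)\,\boldsymbol{H}_L\cdot\nu^i_L\Big)\,\nu^i_L ,
\]
so summing $\dot\tau^i_L\cdot V$ over $i$ (the $e$-component dropping by $\sum_i\tau^i_L=0$) and adding the boundary term above, the $\phi^i_L\,\partial_{\tau^i_L}\phi^i_L$ pieces cancel and one is left with $\sum_L\int_L\big((V\cdot e)\sum_i\phi^i_L\,\mathrm{II}_{\Lambda^i}(e,\tau^i_L)-\sum_i\phi^i_L(V\cdot\tau^i_L)\,\boldsymbol{H}_L\cdot\nu^i_L\big)$.

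Identifying this with $-\sum_L\sum_i\int_L(\phi^i_L)^2\,\boldsymbol{H}_L\cdot\tau^i_L$ is the step I expect to be the main obstacle, since \emph{a priori} its integrand still involves the tangential data $V\cdot e$ and $V\cdot\tau^i_L$, which must all cancel. This rests on three facts along $L$: (i) minimality of $\Lambda^i_L$ and the traced Gauss equation give $\mathrm{II}_{\Lambda^i}(\tau^i_L,\tau^i_L)=-\mathrm{II}_{\Lambda^i}(e,e)=-\boldsymbol{H}_L\cdot\nu^i_L$; (ii) because the three conormals keep mutual angle $120^\circ$, the orthonormal pairs $\{\tau^i_L,\nu^i_L\}$ rotate within $e^\perp$ at a rate independent of $i$ as one moves along $L$, whence $\mathrm{II}_{\Lambda^i}(e,\tau^i_L)=\mathrm{sign}^i_L\,\omega$ with a common $\omega$; and (iii) the compatibility identity $\sum_i\mathrm{sign}^i_L\phi^i_L=0$ of Proposition~\ref{prop_equi_comp}, valid since $\phi=V\cdot\nu$. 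Facts (ii) and (iii) annihilate the $V\cdot e$ term; and writing $\phi^i_L=V^{\perp}\cdot\nu^i_L$, $V\cdot\tau^i_L=V^{\perp}\cdot\tau^i_L$ with $V^{\perp}$ the sheet-independent component of $V$ orthogonal to $e$, one checks that $(V\cdot\tau^i_L)\,\boldsymbol{H}_L\cdot\nu^i_L-\phi^i_L\,\boldsymbol{H}_L\cdot\tau^i_L$ equals $\mathrm{sign}^i_L$ times the ($i$-independent) determinant of $V^{\perp}$ and $\boldsymbol{H}_L$ in $e^\perp$, so that $\sum_i\phi^i_L\big((V\cdot\tau^i_L)\,\boldsymbol{H}_L\cdot\nu^i_L-\phi^i_L\,\boldsymbol{H}_L\cdot\tau^i_L\big)=0$ by (iii). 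Combining the area and junction parts gives the stated identity; compact support of $\varphi_t$ in $W\subset\subset U$ keeps all sums finite and justifies interchanging the derivative with the summations.
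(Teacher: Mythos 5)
Your proposal is correct in substance, and it supplies a computation the paper itself does not reproduce: the paper simply cites the second variation computation of \cite{Wang2022curvature} (done for triple junction surfaces) and asserts it carries over to minimal Plateau surfaces. Your argument makes the transfer explicit — the $T$-points form a discrete set that enters none of the area or line integrals — and then carries out the junction computation directly: the acceleration and length-element contributions drop by $\sum_{i}\tau^i_L=0$; your formula for $\dot{\tau}^i_L$ (components only along $e$ and $\nu^i_L$, with $\mathrm{II}_{\Lambda^i}(\tau^i_L,\tau^i_L)=-\boldsymbol{H}_L\cdot\nu^i_L$ by minimality) is correct; the $\partial_{\tau^i_L}\phi^i_L$ terms cancel against the boundary terms from integrating $\dot H_\Lambda\phi_\Lambda$ by parts; and the remaining tangential terms vanish by exactly the right mechanism, namely the constant $120^\circ$ structure along $\Sigma_Y$ (which forces the frames $(\tau^i_L,\nu^i_L)$ of $e^\perp$ to rotate at a common rate up to the factor $\mathrm{sign}^i_L$) together with the compatibility identity $\sum_i\mathrm{sign}^i_L\phi^i_L=0$ of Proposition \ref{prop_equi_comp}; your determinant identity in $e^\perp$ for the last cancellation, which reduces the junction integrand to $-\sum_i(\phi^i_L)^2\,\boldsymbol{H}_L\cdot\tau^i_L$, checks out. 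This is the same type of direct second-variation computation the paper outsources, so conceptually the routes coincide; what your write-up adds is the precise role of the sign functions and of the $T$-points.

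One technical point is off: differentiating the first variation identity at time $s$ presupposes that $H_{\Lambda_s}$, $\mathrm{II}_{\Lambda_s}$ and the conormals of $\Sigma_s$ are classically defined for $s\neq 0$, i.e. that $\varphi_s$ is (piecewise) $C^2$ in the spatial variables, whereas Definition \ref{def_variation} only assumes $C^{1,\alpha}$ diffeomorphisms; so your claim that $C^2$ regularity in $t$ is all that is needed is not accurate for the stated class of variations. The standard remedy is either to compute $\left.\frac{d^2}{dt^2}\right|_{t=0}$ of the pulled-back area Jacobian on the fixed $\Sigma$ (this requires only one spatial derivative of $\varphi_t$ and two $t$-derivatives and yields the same bulk and junction integrands), or to observe that for the stability inequality (\ref{eq_stab}) it suffices, by approximation, to test with smooth compactly supported variations. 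With that adjustment the argument is complete.
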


We say a minimal Plateau surface $\Sigma$ is \textit{stable} if for every variation $\Sigma_t$ of $\Sigma$ with support in $W\subset \subset U$, we have $\left. \frac{d ^2}{d t^2}\right|_{t=0}\left|\Sigma_t\cap W\right|\ge 0$.

Note that by an approximation argument, we know
 $\Sigma$ is stable if and only if for every $\phi \in W^{1,2}_2(\Sigma)$ with compact support, we have
\begin{equation}
	\sum_{\Lambda \in \mathcal{A} }^{}\int_{ \Lambda} \left|\nabla_\Lambda \phi_\Lambda\right|^2-\left|A_\Lambda\right|^2\phi_\Lambda^2d\mu_\Lambda-\sum_{L\in \mathcal{L} }^{}
			\sum_{i =1}^{3}\int_{ L} ( \phi^i_L )^2\boldsymbol{H}_L\cdot \tau^i_Ld\mu_L\ge 0.
		\label{eq_stab}
\end{equation}

\section{Proof of Main Theorem}%
\label{sec:main_theoreem}

%
%


From now on, we suppose $\Sigma$ is a complete minimal Plateau surface in $\mathbb{R}^3$.
We use $\left\{ \Omega^j \right\}_{j=1}^N$ to denote the set of all connected components of $\mathbb{R}^3\backslash \Sigma$. That is, we write
\[
	\mathbb{R}^3 \backslash \Sigma=\bigcup _{j=1}^N \Omega^j.
\]

Note that we allow $N=+\infty$.
For each connected component $\Omega^j$, we know it induces a locally constant function $f^j:=f^{\Omega^j}$. With these notations, let us prove our main theorem.

\begin{proof}
	[Proof of Theorem \ref{thm_main_Plat}]



Fix a point $p_0 \in \Sigma$ and a positive integer $n \in \mathbb{N}$.
We define a logarithmic cutoff function $\zeta:\mathbb{R}^3 \rightarrow \mathbb{R} $ as
\[
	\zeta(p)=
	\begin{cases}
	1, & p \in B_1(p),\\
	1-\frac{\log \left|p-p_0\right|}{n}, & p \in B_{e^n}(p_0)\backslash B_{1}(p_0),\\
	0, & \text{otherwise}.
	\end{cases}
\]

Note that the function $f^i\zeta:=(f_\Lambda \zeta|_{\Lambda})_{\Lambda \in \mathcal{A}}$ satisfies the compatible condition. Then we can choose $\phi=f^i \zeta$ in our stability inequality (\ref{eq_stab}) to get
\begin{align}
	&\sum_{\substack{\Lambda \in \mathcal{A} \text{ and } \Lambda \in \overline{\Omega^j}} }^{}\int_{ \Lambda\cap B_1(p_0)}\left|A_\Lambda\right|^2 d\mu_\Lambda + 
	\sum_{L \in \mathcal{L}, L \in \overline{\Omega^j}}^{}\sum_{\Lambda^i_L \subset \overline{\Omega^j} }^{}\int_{L}  \zeta^2 \boldsymbol{H}_L \cdot \tau^i_L d\mu_L\nonumber \\
	\le{}& \sum_{\Lambda \in \mathcal{A} \text{ and }\Lambda \in \overline{\Omega^j} }^{}\int_{ \Lambda \cap (B_{e^{n}}(p_0)\backslash B_1(p_0))} \frac{1}{n^2\rho^2}d\mu_\Lambda.
	\label{eq_pf_stab}
\end{align}

Now we can take sum over all $j$ for the inequality (\ref{eq_pf_stab}) to get
\begin{align*}
	{} & 2\sum_{\Lambda \in \mathcal{A} }^{}\int_{ \Lambda \cap B_1(p_0)} \left|A_\Lambda\right|^2d\mu_\Lambda + 2 \sum_{L \in \mathcal{L} }^{}\sum_{i =1}^{3}\int_{ L} \zeta^2 \boldsymbol{H}_L \cdot \tau^i_L d\mu_L  \\
	\le{} & 2 \sum_{\Lambda \in \mathcal{A} }^{}\int_{ \Lambda \cap (B_{e^n}(p_0)\backslash B_1(p_0))}\frac{1}{n^2\rho^2}d\mu_\Lambda.
\end{align*}

The stationary condition implies $\sum_{i =1}^{3}\tau^i_L=0$ along $L$. Hence, we get
\begin{align*}
	 {} & \sum_{\Lambda \in \mathcal{A} }^{}\int_{ \Lambda \cap B_1(p_0)} \left|A_\Lambda\right|^2d\mu_\Lambda\le \sum_{k=1 }^{n}\int_{\Sigma\cap (B_{e^{k}}(p_0)\backslash B_{e^{k-1}}(p_0))} \frac{1}{n^2\rho^2}d\mu_\Sigma\\
	\le{} & \sum_{k=1 }^{n}\int_{\Sigma\cap (B_{e^{k}}(p_0)\backslash B_{e^{k-1}}(p_0))} \frac{1}{n^2e^{2k-2}}d\mu_\Sigma\le \sum_{k =1}^{n}\frac{Ce^2}{n^2}=\frac{C}{n}.
\end{align*}
Here, we have used the condition of quadratic area growth.
Let $n\rightarrow +\infty$, we know $A_\Lambda$ vanishes in $B_1(p_0)$.
By the arbitrariness of $p_0$, we know $\left|A _\Lambda\right|$ should vanish everywhere on $\Lambda$ for each $\Lambda \in \mathcal{A}$.
This shows $\Lambda$ is flat for each $\Lambda \in \mathcal{A}$.
\end{proof}

Now, we can give the proof of Corollary \ref{cor_main_plat}.


\begin{proof}[Proof of Corollary \ref{cor_main_plat}]
	By Theorem \ref{thm_main_Plat}, we know each $\Lambda \in \mathcal{A}$ is a smooth domain in some plane and each $L \in \mathcal{L}$ is a line segment, ray or a straight line in $\mathbb{R}^3$.
	This implies $\Lambda$ should be congruent to a generalized polygon in $\mathbb{R}^2$ (could be unbounded). Note that the interior angle of $\Lambda$ is $\cos(-\frac{1}{3})$. So $\Lambda$ can only be unbounded and it could have at most 3 sides.
	Hence, we know $\Lambda$ is congruent to one of the following domains in $\mathbb{R}^2 $.
	\begin{itemize}
		\item A half-plane $\left\{(x_1,x_2)\in \mathbb{R}^2 : x_1> 0 \right\}$.
		\item A strip $\left\{ (x_1,x_2)\in \mathbb{R}^2 :a<x_1<b \right\}$ for some $a<b$.
		\item An angular region $\left\{ (x_1,x_2)\in \mathbb{R}^2 :\sqrt{2}x_2>\left|x_1\right| \right\}$.
		\item A 3-sided region given by $\left\{ (x_1,x_2)\in \mathbb{R}^2 :x_2+a>2\sqrt{2}\left|x_1\right|,x_2>0 \right\}$ for some $a>0$.
	\end{itemize}

	The first thing we may notice, if $\Sigma$ contains a plane, then it should be the union of disjoint parallel planes.
	Otherwise, we know $\Sigma$ is always connected by the maximum principle.

	If there exists some $L \in \mathcal{L}$ which is a straight line in $\mathbb{R}^3 $, then by induction, we can show that $\Lambda$ is either a half-plane or a strip for any $\Lambda \in \mathcal{A}$.
	Hence we know $\Sigma$ is isometric to $N\times \mathbb{R} $ where $N$ is a stationary network in $\mathbb{R}^2$.
	This is the case of $\Sigma_T=\emptyset $.

	Let us assume none of $L \in \mathcal{L}$ is a straight line. 
	Of course, we know $\Sigma_T\neq \emptyset$.
	If $\Sigma_T$ only contains one point and write $\Sigma_T=\left\{ p \right\}$, then every $L \in \mathcal{L}$ is a ray with $p$ as its endpoint.
	So $\Sigma$ is isometric to $T$.

	Similarly, if $\Sigma_T$ contains two points, we write $\Sigma_T=\left\{ p_1,p_2 \right\}$. Then we know the segment $\overline{p_1p_2}\in \mathcal{L}$ since $\Sigma$ is connected.
	Here, we denote $\overline{pq}$ the open line segment with endpoints $p,q$. Any element in $\mathcal{L}$ besides $\overline{p_1p_2}$ is a ray in $\mathbb{R}^3 $. Hence, the only possible $\Sigma$ is the one shown in Figure \ref{fig:flat-tplanes}.

	If $\Sigma_T$ contains more than two points. Suppose $\Sigma_T=\left\{ p_1,p_2,p_3,\cdots  \right\}$. Without loss of generality, we suppose $\overline{p_1p_2},\overline{p_2p_3}\in \mathcal{L}$.
	Let $\Lambda \in \mathcal{A}$ be the one such that $\overline{p_1p_2}\cup \overline{p_2p_3}\subset \overline{\Lambda}$.
	But this is impossible since $\Lambda$ can only have at most one side which is bounded.
	Hence, $\Sigma_T$ cannot contain more than two points.
\end{proof}

\subsection*{Acknowledge}%
\label{sub:acknowledge}

The author wishes to thank Prof. Jacob Bernstein for providing the extension of my work and some constructive comments and thank Prof. Francesco Maggi for his very detailed suggestion.
He also wishes to thank his advisor Prof. Martin Li for his constant support and encouragement.

\appendix
\section{Appendix}%
\label{sec:appendix}

\subsection{Extension of the results in \cite{Wang2022curvature}}%
\label{sub:extension_of_the_resulst_in_wang2022curvature}

We use the notations from \cite{Wang2022curvature}. We summarize those basic definitions here.

\begin{definition}
	\label{def_minimal_multiple}
	We say $\Sigma=(\theta^1\Sigma^1,\cdots ,\theta^q \Sigma^q)$ is a \textit{minimal multiple junction surface} if the following conditions hold.
	\begin{itemize}
		\item Each $\Sigma^i$ is an immersed (connected, oritentable) minimal surface with smooth boundary $\partial \Sigma^i$ and $\partial \Sigma^i=\partial \Sigma^j$ for $1\le i,j\le q$.
			We write $\Gamma=\partial \Sigma^i$.
		\item $\sum_{i =1}^{q}\theta^i \tau^i=0$ along $\Gamma$ where $\tau^i$ is the outer normal vector field $\Gamma$ in $\Sigma^i$.
	\end{itemize}
\end{definition}

We use $\nu^i$ to denote the unit normal vector field on $\Sigma^i$.

Now let us define a distance function $d(\cdot,\cdot)$ on $\Sigma$.
Let $d^i(\cdot,\cdot)$ be the intrinsic distance function on $\Sigma^i$ ($d^i(p_1,p_2)=+\infty$ if $p_1,p_2$ belongs to the different components of $\Sigma^i$). 
We can define
\begin{align*}
	d(x,y):={} & \inf \{ \sum_{k =0}^{l-1}
	d ^{i_k}(x_k,x_{k+1}):x_0=x,x_{l}=y, x_1,\cdots ,x_{l-1} \in \Gamma,
\\
	 {} & i_0=i, i_{l-1}=j, 1\le i_1,\cdots ,i_{l-2}\le q,\quad \text{ for }l \in \mathbb{N}\}.
\end{align*}

We denote $B_r^\Sigma(x)=\left\{ y \in \Sigma: d(x,y) <r \right\}$ the intrinsic ball on $\Sigma$.

\begin{definition}
	\label{def_complete}
	We say a minimal multiple junction surface $\Sigma$ is complete if it is complete with respect to the distance function $d(\cdot,\cdot)$.
\end{definition}

\begin{definition}
	\label{def_equil_angle}
	We say $\Sigma=(\theta^1\Sigma^1,\cdots ,\theta^q \Sigma^q)$ has \textit{equilibrium angles} along $\Gamma$ if for each $1\le i,j\le q$, we have $\measuredangle (\nu^i,\nu^j)$ is a constant along $\Gamma$.
	Here, we write $\measuredangle (\nu^i,\nu^j)$ as the directed angle between $\nu^i$ and $\nu^j$ when we fix an orientation along $\Gamma$.
\end{definition}

\begin{remark}
	This is slightly different from the definition that appeared in \cite[Section 6]{Wang2022curvature}.
	Indeed, we should use $\nu^i$ to define the concept of equilibrium angles instead of using $\tau^i$ since compatibility is related to $\nu^i$.
	The angle condition that appeared in Theorem 1 of \cite{Wang2022curvature} should also be understood as the equilibrium angles define in Definition \ref{def_equil_angle}.
\end{remark}

\begin{theorem}
	(cf. \cite[Theorem 4]{Wang2022curvature})
	Let $\Sigma$ be a complete stable minimal multiple junction surface in $\mathbb{R}^3 $.
	Then, for any $\phi =(\phi^i)_{i=1}^q$ with $\phi^i \in W^{1,2}(\Sigma^i)$ satisfying the compatible condition that $\phi^i=W\cdot \nu^i$ $\mathcal{H}^{1}$-a.e., on $\Gamma$ for some $L^2$ vector field $W$ along $\Gamma$, we have
	\[
		0\le 
		\sum_{i =1}^{q}
		\int_{ \Sigma^i} \left( \left|\nabla _{\Sigma^i}\phi^i\right|^2-\left|A _{\Sigma^i}\right|^2\left(\phi^i\right)^2\right)\theta^i d\Sigma^i-\int_{ \Gamma} (\phi^i)^2 \boldsymbol{H}_\Gamma \cdot \tau^i \theta^i d\Gamma.
	\]
	where $\boldsymbol{H}_\Gamma$ is the geodesic curvature vector of $\Gamma$.
	\label{thm_stab_ineq_multi}
\end{theorem}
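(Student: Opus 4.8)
The plan is to prove this by a direct truncation/cutoff argument reducing the general case to the second variation formula already recorded earlier in the excerpt. The statement of Theorem~\ref{thm_stab_ineq_multi} is essentially the passage from compactly-supported variations to arbitrary $W^{1,2}$ test functions on a complete (possibly non-compact) minimal multiple junction surface, so the heart of the matter is an approximation scheme that is compatible with the junction condition $\phi^i = W\cdot\nu^i$ along $\Gamma$.

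First I would fix a base point and, for each radius $R>0$, introduce a Lipschitz cutoff $\eta_R$ on $\Sigma$ (built from the intrinsic distance $d(\cdot,\cdot)$, with $\eta_R\equiv 1$ on $B^\Sigma_R$, supported in $B^\Sigma_{2R}$, and $|\nabla\eta_R|\le C/R$) and consider the truncated test function $\eta_R\phi = (\eta_R\phi^i)_{i=1}^q$. The key point is that multiplying by the \emph{same} scalar cutoff on every sheet preserves the compatibility condition: since $\eta_R$ is a well-defined function on $\Sigma$ it agrees across $\Gamma$, so $\eta_R\phi^i = (\eta_R W)\cdot\nu^i$ along $\Gamma$ with $\eta_R W$ still an $L^2$ vector field. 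Thus $\eta_R\phi$ is an admissible, compactly supported test function, and one can further mollify each sheet to land in the $C^{1,\alpha}$ variational class so that the second variation formula (the displayed theorem just before the definition of stability) and hence stability applies verbatim:
\[
0\le \sum_{i=1}^q\int_{\Sigma^i}\big(|\nabla_{\Sigma^i}(\eta_R\phi^i)|^2 - |A_{\Sigma^i}|^2(\eta_R\phi^i)^2\big)\theta^i\,d\Sigma^i - \int_\Gamma (\eta_R\phi^i)^2\,\boldsymbol{H}_\Gamma\cdot\tau^i\,\theta^i\,d\Gamma.
\]
Then I would expand $|\nabla(\eta_R\phi^i)|^2 = \eta_R^2|\nabla\phi^i|^2 + 2\eta_R\phi^i\nabla\eta_R\cdot\nabla\phi^i + \phi_i^2|\nabla\eta_R|^2$, and let $R\to\infty$. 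The terms with $|\nabla\eta_R|^2\le C/R^2$ on an annulus and the cross term (estimated by Cauchy–Schwarz) both tend to $0$ because $\phi^i\in W^{1,2}(\Sigma^i)$ makes $\int|\phi^i|^2$ and $\int|\nabla\phi^i|^2$ finite, while dominated convergence handles the remaining pieces (using $\eta_R\uparrow 1$, and that the boundary integral $\int_\Gamma(\phi^i)^2\boldsymbol{H}_\Gamma\cdot\tau^i\theta^i\,d\Gamma$ is absolutely convergent via the trace of $\phi^i$ in $W^{1,2}$).

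The main obstacle I anticipate is technical rather than conceptual: justifying that the mollification on each sheet can be done \emph{while keeping exact compatibility along $\Gamma$}, and controlling the boundary trace terms. One clean way is to mollify only in the interior and interpolate near $\Gamma$ using a collar, or to invoke a density result (of the kind developed in \cite{thesis}) that $C^{1,\alpha}$ compatible functions are dense in $W^{1,2}$ compatible functions; then the second variation inequality for smooth compatible variations passes to the $W^{1,2}$ closure by continuity of both sides. A secondary subtlety is that completeness is used precisely to ensure the intrinsic balls $B^\Sigma_R$ are relatively compact (so the cutoffs have compact support in $\mathbb{R}^3$ and the variation is genuinely compactly supported), which is why the hypothesis is stated in terms of the distance $d(\cdot,\cdot)$ rather than the ambient metric.
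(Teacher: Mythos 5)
The paper never proves this statement: it is quoted verbatim (``cf.\ [Theorem 4]'' of \cite{Wang2022curvature}) and used as a black box in the appendix, while the analogous passage in the main body (from compactly supported variations to $W^{1,2}_2$ test functions) is dismissed with the phrase ``by an approximation argument.'' So there is no in-paper proof to compare against; judged on its own, your truncation-plus-density scheme is the natural reconstruction and is, in outline, correct: multiplying every sheet by the same intrinsic-distance cutoff $\eta_R$ preserves the condition $\phi^i=W\cdot\nu^i$ (with $\eta_R W$ still $L^2$), completeness plus local compactness makes $\overline{B^\Sigma_{2R}}$ compact so the truncated function is admissible, and since $\phi^i\in W^{1,2}(\Sigma^i)$ globally, the $|\nabla\eta_R|^2$ and cross terms vanish as $R\to\infty$ (a linear cutoff suffices here; no logarithm is needed). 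Two points deserve to be stated more carefully. First, the step you rightly flag as the obstacle --- approximating a compactly supported $W^{1,2}$ compatible function by $C^{1,\alpha}$ compatible ones (equivalently, by genuine variations, via the vector-field construction of Proposition \ref{prop_equi_comp}) with convergence strong enough to pass both the Dirichlet term and the boundary trace term to the limit --- is precisely the content of the cited Theorem 4 of \cite{Wang2022curvature}; invoking ``a density result of the kind developed in \cite{thesis}'' is acceptable as a pointer but is where the actual work lives, so your proof is complete only modulo that external input. Second, your claim that the boundary integral is absolutely convergent ``via the trace of $\phi^i$ in $W^{1,2}$'' is not quite enough: the trace bound gives $(\phi^i)^2\le |W|^2\in L^1(\Gamma)$, but dominated convergence for $\int_\Gamma(\eta_R\phi^i)^2\,\boldsymbol{H}_\Gamma\cdot\tau^i\,\theta^i\,d\Gamma$ also needs $(\phi^i)^2\,\boldsymbol{H}_\Gamma\cdot\tau^i\in L^1(\Gamma)$, i.e.\ some control on $\boldsymbol{H}_\Gamma$, which is implicitly required anyway for the right-hand side of the theorem to be well defined but should be made explicit rather than attributed to the trace alone.
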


With the above definitions, we can state our main theorem.
\begin{theorem}
	\label{thm_bern}
	Let $\Sigma=(\theta^1 \Sigma^1,\cdots ,\theta^q \Sigma^q)$ be a minimal multiple junction surface in $\mathbb{R}^3 $. We suppose $\Sigma$ is complete, stable and has quadratic area growth and equilibrium angles along $\Gamma$.
	Then each $\Sigma^i$ is flat.
\end{theorem}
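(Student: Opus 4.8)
The plan is to follow the proof of Theorem~\ref{thm_main_Plat} almost verbatim; the only genuinely new ingredient is the choice of test functions. The locally constant functions $f^{\Omega^j}$ coming from the components of the complement are unavailable here, since the sheets $\Sigma^i$ are merely immersed, so I would instead extract two test functions from the equilibrium angle hypothesis. Concretely, fix $p_0\in\Sigma$ and $n\in\mathbb{N}$, let $\zeta\colon\mathbb{R}^3\to\mathbb{R}$ be the logarithmic cutoff from the proof of Theorem~\ref{thm_main_Plat} (thus $\zeta\equiv1$ on $B_1(p_0)$, $\zeta(p)=1-\frac{\log\rho}{n}$ on $B_{e^n}(p_0)\setminus B_1(p_0)$ with $\rho=|p-p_0|$, and $\zeta\equiv0$ elsewhere), and fix an orientation on $\Gamma$ with unit tangent $T$. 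Along $\Gamma$ the normals $\nu^i$ all lie in the $2$-dimensional normal plane $T^\perp$; setting $(\nu^1)^\perp:=T\times\nu^1$, the pair $\{\nu^1,(\nu^1)^\perp\}$ is an orthonormal frame of $T^\perp$ along $\Gamma$, and by the equilibrium angle hypothesis the directed angle $\measuredangle(\nu^1,\nu^i)$ is constant along $\Gamma$, so $\kappa_i:=\nu^1\cdot\nu^i$ and $\lambda_i:=(\nu^1)^\perp\cdot\nu^i$ are constants along $\Gamma$ satisfying $\kappa_i^2+\lambda_i^2=1$ and $\nu^i=\kappa_i\nu^1+\lambda_i(\nu^1)^\perp$.

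Next I would apply Theorem~\ref{thm_stab_ineq_multi} to the two tuples $\phi_{(1)}=(\kappa_i\,\zeta|_{\Sigma^i})_{i=1}^q$ and $\phi_{(2)}=(\lambda_i\,\zeta|_{\Sigma^i})_{i=1}^q$. By quadratic area growth each component lies in $W^{1,2}(\Sigma^i)$, and each tuple satisfies the compatible condition: on $\Gamma$ we have $\kappa_i\zeta=(\zeta\nu^1)\cdot\nu^i$ and $\lambda_i\zeta=(\zeta(\nu^1)^\perp)\cdot\nu^i$, so one may take $W=\zeta\nu^1|_\Gamma$ and $W=\zeta(\nu^1)^\perp|_\Gamma$ respectively. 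Adding the two resulting inequalities, the identity $\kappa_i^2+\lambda_i^2=1$ collapses every weight: the curvature integrands sum to $|A_{\Sigma^i}|^2\zeta^2$, the gradient integrands sum to $|\nabla_{\Sigma^i}\zeta|^2$ (as $\kappa_i,\lambda_i$ are constants), and the boundary terms combine into
\[
	\sum_{i=1}^q\int_\Gamma(\kappa_i^2+\lambda_i^2)\,\zeta^2\,\boldsymbol{H}_\Gamma\cdot\tau^i\,\theta^i\,d\Gamma=\int_\Gamma\zeta^2\,\boldsymbol{H}_\Gamma\cdot\Big(\sum_{i=1}^q\theta^i\tau^i\Big)\,d\Gamma=0
\]
by the stationarity relation $\sum_{i=1}^q\theta^i\tau^i=0$. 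This gives
\[
	\sum_{i=1}^q\int_{\Sigma^i}|A_{\Sigma^i}|^2\,\zeta^2\,\theta^i\,d\Sigma^i\le\sum_{i=1}^q\int_{\Sigma^i}|\nabla_{\Sigma^i}\zeta|^2\,\theta^i\,d\Sigma^i.
\]

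It then remains to estimate the right-hand side exactly as in the proof of Theorem~\ref{thm_main_Plat}: on $B_{e^n}(p_0)\setminus B_1(p_0)$ one has $|\nabla_{\Sigma^i}\zeta|^2\le\frac1{n^2\rho^2}$, so splitting into the annuli $B_{e^k}(p_0)\setminus B_{e^{k-1}}(p_0)$ for $k=1,\dots,n$ and using quadratic area growth bounds the sum by a constant multiple of $\frac1n$. Letting $n\to\infty$ yields $A_{\Sigma^i}\equiv0$ on $\Sigma^i\cap B_1(p_0)$ for every $i$, and since $p_0$ is arbitrary, each $\Sigma^i$ is flat.

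The main, and essentially only, point requiring thought is the construction of the test functions: one has to observe that the two orthogonal choices $W=\zeta\nu^1$ and $W=\zeta(\nu^1)^\perp$ along $\Gamma$ produce compatible tuples whose squared coefficients sum to $1$, which is precisely what makes the weights uniform in $i$ and forces the boundary terms to cancel against $\sum_{i=1}^q\theta^i\tau^i=0$. This is exactly where the equilibrium angle hypothesis is used: without it $\kappa_i$ and $\lambda_i$ would vary along $\Gamma$, so $\kappa_i\zeta$ would not extend to a controllable $W^{1,2}$ function on $\Sigma^i$ and the gradient terms would acquire uncontrolled contributions from derivatives of $\kappa_i$. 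The remaining points — that quadratic area growth controls $|\Sigma\cap B_r(p_0)|$ in the form used above, and that $\zeta|_{\Sigma^i}\in W^{1,2}(\Sigma^i)$ — are routine and immediate from the definitions.
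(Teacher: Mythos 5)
Your proposal is correct and follows essentially the same argument as the paper: the paper likewise picks an orthonormal frame $W_1,W_2$ of the normal plane along $\Gamma$ (your $\nu^1$, $T\times\nu^1$), uses the equilibrium-angle hypothesis to make the coefficients $W_j\cdot\nu^i$ constant, multiplies by a logarithmic cutoff, adds the two stability inequalities so that $(c_1^i)^2+(c_2^i)^2=1$ kills the boundary terms via $\sum_i\theta^i\tau^i=0$, and concludes with quadratic area growth. The only differences are cosmetic (the paper perturbs $W_1$ away from $\nu^i$ and phrases the cutoff via intrinsic distance, while you use the extrinsic cutoff), and neither affects the argument.
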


\begin{proof}
	Since $\Sigma$ has equilibrium angles along $\Gamma$, we can choose a smooth unit normal vector field $W_1$ along $\Gamma$ such that $\measuredangle (W_1, \nu^i)$ is a constant function along $\Gamma$. By perturbing $W_1$, we can assume $\measuredangle (W_1, \nu^i)\in (0,\frac{\pi}{2})\cup (\frac{\pi}{2},\pi)\cup (\pi,\frac{3\pi}{2})\cup (\frac{3\pi}{2},2\pi)$.
We choose another unit normal vector field $W_2$ along $\Gamma$ such that $\measuredangle (W_1,W_2)=\frac{\pi}{2}$.
Clearly, we know $\measuredangle (W_2,\nu^i)=\measuredangle (W_1,\nu^i)-\frac{\pi}{2}$.

If we define $c_1,c_2 \in C^{\infty}(\Sigma)$ by choosing $c_j^i=W_j\cdot \nu^i$ as a constant function on $\Sigma^i$, we can find $c_1,c_2 \in C^{\infty}_1(\Sigma)$.
The idea is, we want to put $c_1,c_2$ into stability inequality (\ref{eq_stab}).
Of course we need cut-off functions to help us since $c_1,c_2$ do not have compact support.

Fix a point $p_0 \in \Sigma$ and a positive integer $n \in \mathbb{N}$, we denote $\rho(p):=\mathrm{dist}(p,p_0)$.
Note that we allow $p_0\notin \Gamma$.
We choose test functions $\phi_1,\phi_2$ defined as
\[
	\phi_j^i=
	\begin{cases}
	c^i_j(p), & p \in B_1^\Sigma(p_0)\cap \Sigma^i,\\
	c^i_j(p)\left( 1-\frac{\log \rho(p)}{n} \right),& p \in (B^\Sigma _{e^n}(p_0)\backslash B_1^\Sigma(p_0))\cap \Sigma^i, \\
	0, & \text{otherwise}.
	\end{cases}
\]

Note that $\phi_1,\phi_2$ satisfy the compatible condition in the sense that
\[
	\phi_j^i=\tilde{W}_j \cdot \nu^i,\quad \text{ along }\partial \Sigma^i
\]
for some Lipschitz normal vector field $\tilde{W}_j$ defined along $\Gamma$.
In particular, we can write $\tilde{W}_j$ explicitly as,
\[
	\tilde{W}_j=
	\begin{cases}
	W_j(p), & p \in B_1^\Sigma(p_0)\cap \Gamma\\
	W_j(p)\left( 1-\frac{\log \rho(p)}{n} \right), & p \in (B_{e^n}^\Sigma(p_0)\backslash B_1^\Sigma(p_0))\cap \Gamma,\\
	0,& \text{otherwise}.
	\end{cases}
\]


Hence, we can choose $\phi=\phi_j$ in Theorem \ref{thm_stab_ineq_multi} and get the following inequality.
\begin{equation}
	\sum_{i =1}^{q}\int_{ \Gamma} (\phi^i_j)^2 \boldsymbol{H}_\Gamma \cdot \tau^i \theta^i d\Gamma+\int_{ \Sigma^i}\left|A _{\Sigma^i}\right|^2 (\phi^i_j)^2 \theta^i d\Sigma^i \le
	\sum_{i =1}^{q}\int_{ \Sigma^i} \left|\nabla _{\Sigma^i}\phi^i_j\right|^2 \theta^i d\Sigma^i.
	\label{eq_pf_stab}
\end{equation}

Note that we have
\begin{align*}
	(\phi_1^i)^2+(\phi_2^i)^2={} & (\tilde{W}_1 \cdot \nu^i)^2+(\tilde{W}_2\cdot \nu^i)^2=|\tilde{W}_1|^2,
\end{align*}
since $\measuredangle (\tilde{W}_1,\tilde{W}_2)=\frac{\pi}{2}$ and $|\tilde{W}_1|=|\tilde{W}_2|$ if $|\tilde{W}_1|\neq 0$.
Therefore,
\begin{align*}
	\sum_{i =1}^{q}\left[ (\phi_1^i)^2+(\phi_2^i)^2 \right] \boldsymbol{H}_\Gamma \cdot\tau^i \theta^i={} & |\tilde{W}_1|^2\sum_{i =1}^{q} \boldsymbol{H}_\Gamma \cdot (\theta^i \tau^i)=0
\end{align*}
by the definition of minimal multiple junction surfaces.
Combining \eqref{eq_pf_stab}, we have
\begin{align*}
	&\sum_{i =1}^{q}\int_{ B_1^\Sigma(p_0)} \left[ (c_1^i)^2+(c_2^i)^2 \right] \left|A _{\Sigma^i}\right|^2\theta^i d\Sigma^i\\
	\le{}&
	\sum_{i =1}^{q}\int_{\Sigma^i} 
	\left[ \left|\nabla _{\Sigma^i}\phi_1^i\right|^2+
	\left|\nabla _{\Sigma^i}\phi_2^i\right|^2 \right]\theta^i d\Sigma^i\\
	={}& \sum_{i =1}^{q}\left[ (c_1^i)^2+(c_2^i)^2 \right] \int_{ \Sigma^i \cap (B^\Sigma _{e^n}(p_0)\backslash B^\Sigma _{1}(p_0))} \frac{|\nabla_{\Sigma^i}\rho|^2}{n^2 \rho^2}\theta^i d\Sigma^i \\
	\le{}&  \sum_{i =1}^{q}\left[ (c_1^i)^2+(c_2^i)^2 \right] \sum_{k =1}^{n}
	\int_{ \Sigma^i \cap (B^\Sigma _{e^k}(p_0)\backslash B^\Sigma _{e^{k-1}}(p_0))} 
	\frac{1}{n^2 e^{2k-2}}\theta^i d\Sigma^i\\
	\le{}& \sum_{i =1}^{q}\left[ (c_1^i)^2+(c_2^i)^2 \right] \sum_{k =1}^{n} \frac{Ce^2\theta^i}{n^2} = \frac{C}{n}\sum_{i =1}^{q}\left[ (c_1^i)^2+(c_2^i)^2 \right]\theta^i
\end{align*}

Here, we have used the quadratic area growth condition $\left|\Sigma^i \cap B^\Sigma_r(p_0)\right|\le Cr^2$.
Note that $(c_1^i)^2+(c_2^i)^2>0$ for each $i$, by choosing $n$ large enough, we can conclude $\left|A _{\Sigma^i}\right|$ vanishes in $B_1^\Sigma(p_0)\cap \Sigma^i$ for each $1\le i\le q$.
By the arbitrariness of $p_0$, we know $\left|A _{\Sigma^i}\right|$ should vanish everywhere on $\Sigma^i$.

This shows $\Sigma^i$ is flat for each $1\le i\le q$.
\end{proof}



%
%

\bibliographystyle{alpha}
\bibliography{../references}

\end{document}